\newtheorem{theorem}{Theorem}[section]
\newtheorem{corollary}[theorem]{Corollary}
\newtheorem{lemma}[theorem]{Lemma}
\newtheorem{claim}[theorem]{Claim}
\newtheorem{proposition}[theorem]{Proposition}
\newtheorem{remark}[theorem]{Remark}
\theoremstyle{definition}
\newtheorem{definition}[theorem]{Definition}
\newtheorem{assumption}[theorem]{Assumption}
\newcommand{\To}{\rightarrow}
\newcommand{\orb}{\text{orb}}
\newcommand{\sgn}{\mbox{sgn}}
\newcommand{\bbr}{\mbox{$\mathbb{R}$}}
\newcommand{\bbz}{\mbox{$\mathbb{Z}$}}
\newcommand{\bbn}{\mbox{$\mathbb{N}$}}
\newcommand{\di}{\text{d}}
\newcommand{\dP}{\delta P}
\begin{document}

\title{Code \& order in polygonal billiards}

\author{Jozef Bobok}

\author{Serge Troubetzkoy}

\address{KM FSv \v CVUT, Th\'akurova 7, 166 29 Praha 6, Czech Republic}
\email{bobok@mat.fsv.cvut.cz}

\address{Centre de physique th\'eorique\\
Federation de Recherches des Unites de Mathematique de Marseille\\
Institut de math\'ematiques de Luminy and\\
Universit\'e de la M\'editerran\'ee\\
Luminy, Case 907, F-13288 Marseille Cedex 9, France}
\email{troubetz@iml.univ-mrs.fr}
\urladdr{http://iml.univ-mrs.fr/{\lower.7ex\hbox{\~{}}}troubetz/} \date{}

\begin{abstract}
Two  polygons $P,Q$ are code equivalent if there are billiard orbits $u,v$ which hit the same sequence of sides and such that the projections of the orbits are dense in the boundaries $\partial P, \partial Q$. Our main results show when code equivalent polygons have the same angles, resp. are similar, resp. affinely similar.
\end{abstract}

\maketitle

\section{Introduction}
Consider a simply connected polygon $P$ with $k$ sides.  Code each the billiard orbit by the sequence of sides it hits.  We study the following question: {\em can the arising sequence be realized as the coding of a billiard orbit in another
polygon}? Of course if the orbit is not dense in the boundary $\partial P$, then we can modify $P$ preserving the orbit
by adding sides on the untouched
part of the boundary.  In the case that the orbit is periodic we have even more, there is an open neighborhood of $P$ in a co-dimension one submanifold of the set of all
$k$-gons for which the periodic orbit persists \cite{T}.  It is therefore natural to study this question under the assumption that the orbit
is dense in the boundary. More precisely, we say that two polygons $P,Q$ are code equivalent if there are forward billiard
orbits $u,v$ whose projections to the boundaries $\partial P$, $\partial Q$ are dense.
We study this question under this assumption and under various regularity conditions on the orbit $u$.

We first assume a weak regularity condition, a direction $\theta$ is called non-exceptional if there is no generalized diagonal
in this direction.  All but countably many directions are non-exceptional.  Under this assumption we show that
an irrational polygon can not be code
equivalent to a rational polygon (Theorem \ref{t6}) and if two rational polygons are code equivalent then the
angles at corresponding corners are equal (Theorem \ref{p-sameangles}), for triangles this implies they must be similar
(Corollary \ref{c-triangle}).  Next we assume a stronger regularity condition on the angle, unique ergodicity of the billiard flow
in the direction $\theta$, which is verified for almost every direction in a rational polygon. Under this assumption
we show that two rational polygons which are
code equivalent must be affinely similar and if the greatest common denominator of the angles is at least 3 then they
must be similar (Theorem \ref{t5}, Corollary \ref{cc1}).

In \cite{BT} we proved analogous results under the assumption that $P,Q$ are order equivalent. Our investigation of
code equivalence is motivated by Benoit Rittaud's review article on these results \cite{Ri}.
We compare our results with those of \cite{BT}. We show that under the weak regularity condition order equivalence implies code equivalence (Theorem \ref{t3}), while under
the strong regularity condition they are equivalent (Theorem \ref{converse}, Corollary \ref{equiv}).  
The proof of this equivalence uses  Corollary \ref{cc1}.  We do not know if under the weak regularity
condition code equivalence implies order equivalence.

\section{Polygonal Billiards}\label{spb}

A polygonal billiard table is a polygon $P$.  Our polygons are assumed to be planar, simply connected, not necesarily convex, and compact, with all angles non trivial, i.e. in $(0,2\pi)\setminus \{\pi\}$.
The billiard
flow $\{T_t\}_{t\in\bbr}$ in $P$ is generated by the free motion of
a point mass subject to elastic reflections in the boundary. This
means that the point moves along a straight line in $P$ with a
constant speed until it hits the boundary. At a smooth boundary
point the billiard ball reflects according to the well known law of
geometrical optics: the angle of incidence equals  the angle of
reflection. If the billiard ball hits a corner, (a non-smooth
boundary point), its further motion is not defined. Additionally to
corners, the billiard trajectory is not defined for orbits
tangent to a side.

By $D$ we denote
the group generated by the reflections in the lines through the
origin, parallel to the sides of the polygon $P$. The group $D$ is
either

\begin{itemize}\item finite, when  all the angles of $P$ are of the form $\pi
m_i/n_i$ with distinct co-prime integers $m_i$, $n_i$,  in this
case $D=D_N$ the dihedral group generated by the reflections in
lines through the origin that meet at angles $\pi/N$, where $N$
is the least common multiple of $n_i$'s,\end{itemize} or
\begin{itemize}\item countably infinite, when at least one angle
between sides of $P$ is an irrational multiple of $\pi$.
\end{itemize} In the two cases we will refer to the polygon as
rational, respectively irrational.

Consider the phase space $P\times S^1$ of the billiard flow $T_t$,
and for $\theta\in S^1$, let $R_{\theta}$ be its subset of points
whose second coordinate belongs to the orbit of $\theta$ under $D$.
Since a trajectory changes its direction by an element of $D$ under
each reflection, $R_{\theta}$ is an invariant set of the billiard
flow $T_t$ in $P$. The set $P\times \theta$ will be called a
floor of the phase space of the flow $T_t$.

As usual, $\pi_1$, resp. $\pi_2$
denotes the first natural projection (to the foot
point), resp. the second natural projection (to the
direction).
 A direction, resp.\  a point $u$ from the phase space is
exceptional if it is the direction of a generalized diagonal  (a generalized diagonal is a billiard trajectory that
goes from a corner to a corner), resp.\
$\pi_2(u)$ is such a direction. Obviously there are countably many
generalized diagonals hence also exceptional directions. A
direction, resp.\  a point $u$ from the phase space, which is not
exceptional will be called non-exceptional.

In a rational polygon a billiard trajectory may have only finitely
many different directions. The set $R_{\theta}$ has the structure of a surface.
For non-exceptional $\theta$'s the
faces of $R_{\theta}$ can be glued according to the action of $D_N$
to obtain a flat surface depending only on the polygon $P$ but not
on the choice of $\theta$ - we will denote it $R_P$.

Let us recall the construction of $R_P$. Consider $2N$ disjoint
parallel copies $P_1,\dots,P_{2N}$ of $P$ in the plane. Orient the
even ones clockwise and the odd ones counterclockwise. We will glue
their sides together pairwise, according to the action of the group
$D_N$. Let $0<\theta=\theta_1<\pi/N$ be some angle, and let
$\theta_i$ be its $i$-th image under the action of $D_N$. Consider
$P_i$ and reflect the direction $\theta_i$ in one of its sides. The
reflected direction is $\theta_j$ for some $j$. Glue the chosen side
of $P_i$ to the identical side of $P_j$. After these gluings are
done for all the sides of all the polygons one obtains an oriented
compact surface $R_P$.

Let $p_i$ be the $i$-th vertex of $P$ with the angle $\pi m_i/n_i$
and denote by $G_i$ the subgroup of $D_N$ generated by the
reflections in the sides of $P$, adjacent to $p_i$. Then $G_i$
consists of $2n_i$ elements. According to the construction of $R_P$
the number of copies of $P$ that are glued together at $p_i$ equals
to the cardinality of the orbit of the test angle $\theta$ under the
group $G_i$, that is, equals $2n_i$.

The billiard map $T\colon~V_P=\cup e\times\Theta\subset \dP\times
(-\frac{\pi}{2},\frac{\pi}{2})\To V_P$ associated with the flow $T_t$ is the first return map
to the boundary $\dP$ of $P$. Here the union $\cup e \times \Theta$
is taken over all sides of $P$ and for each side $e$ over the inner
pointing directions $\theta\in\Theta = (-\frac{\pi}{2},\frac{\pi}{2})$ measure with respect to the inner pointing normal.  We will denote points of $V_P$ by $u = (x,\theta)$.

We sometimes use the map $\varrho_1$, $\varrho_2$ and $\varrho$ mapping $(e\times (-\frac{\pi}{2},\frac{\pi}{2}))^2$ into $\bbr^+$ defined as
$\varrho_1(u,\tilde u)=\vert\pi_1(u)-\pi_1(\tilde u)\vert$,  $\varrho_2(u,\tilde u)=\vert\pi_2(u)-\pi_2(\tilde u)\vert$ and  $\varrho=\max\{\varrho_1,\varrho_2\}$. Clearly the map $\varrho$ is a metric.

The bi-infinite (forward, backward) trajectory (with respect to $T$) is not defined for all points from $V_P$.  The set of points from $V_P$ for which the bi-infinite, forward and backward trajectory exists is denoted by $BIV_P$, $FV_P$ and $BV_P$ respectively.

For a simply connected polygon we always consider counterclockwise orientation of its boundary $\dP$.
We denote $[x,x']$ ($(x,x')$) a closed (open) arc with outgoing
endpoint $x$ and incoming endpoint $x'$.

If $P,Q$ are simply connected polygons, two sequences $\{x_n\}_{n\ge 0}\subset \partial  P$ and
$\{y_n\}_{n\ge 0}\subset \partial  Q$ have the same combinatorial order if
for each non-negative integers $k,l,m$
\begin{equation}\label{e14}x_k\in [x_l,x_m]~\iff~y_k\in [y_l,y_m].\end{equation}

We proceed by recalling several well known and useful (for
our purpose) results about polygonal billiards (see for example
\cite{MT}).
Recall that a flat
strip $\mathcal T$ is an invariant subset of the phase space of the
billiard flow/map such that
\begin{itemize}\item[1)] $\mathcal T$ is
contained in a finite number of floors,\item[2)] the billiard
flow/map dynamics on $\mathcal T$ is minimal in the sense that
any orbit which does not hit a corner is dense in $\mathcal T$,
\item[3)] the boundary of $\mathcal
T$ is non-empty and consists of a finite union of generalized
diagonals.\end{itemize}

The set of the corners of $P$ is denoted by $C_P$. As usual, an $\omega$-limit set of a point $u$  is denoted by
$\omega(u)$.

\begin{proposition}\cite{MT}~\label{p-summary}Let $P$ be rational and $u \in FV_P$. Then exactly one of the following three possibilities has to be satisfied.
\begin{itemize}\item[(i)] $u$ is periodic.
\item[(ii)] $\overline{\orb}(u)$ is a flat strip; the billiard flow/map is minimal on $\overline{\orb}(u)$.
\item[(iii)] For the flow $T_t$, $\omega(u)=R_{\pi_2(u)}$. The billiard flow/map is minimal on $R_{\pi_2(u)}$. We have
\begin{equation*}
\#(\{\pi_2(T^n(u))\colon~n\ge 0\})=2N,\end{equation*} and for
every $x\in\partial  P\setminus C_P$,
\begin{equation*}
\#\{u_0\in\omega(u)\colon~\pi_1(u_0)=x\}=N,\end{equation*} where $N=N_P$
is the least common multiple of the denominators of angles of
$P$. Moreover, in this case
\begin{equation*}\pi_2(\{u_0\in\omega(u)\colon~\pi_1(u_0)=x\})=\pi_2(\{u_0\in\omega(u)\colon~\pi_1(u_0)=x'\})\end{equation*}
whenever $x'\notin C_P$ belongs to the same side as $x$. Case (iii) holds whenever $u \in FV_P$ is non-exceptional.
\end{itemize}
\end{proposition}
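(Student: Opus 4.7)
The plan is to reduce the billiard dynamics to a directional flow on a translation surface and then apply the classical Masur trichotomy for such flows.

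First I would unfold the billiard using the Katok--Zemlyakov construction, already sketched in the excerpt: take $2N$ copies of $P$, orient them alternately, and glue opposite sides according to the action of $D_N$. For a fixed direction $\theta$, the $D_N$-orbit of $\theta$ has $2N$ elements (generically), and the union of copies of $P$ carrying these directions forms a compact translation surface $R_\theta$ on which the billiard flow $T_t$ lifts to a straight-line directional flow $\varphi_t^\theta$ in the single direction $\theta$. A periodic orbit of $T_t$ corresponds to a closed leaf of $\varphi_t^\theta$, a generalized diagonal corresponds to a saddle connection in direction $\theta$, and a flat strip corresponds to an invariant subsurface of $R_\theta$ bounded by saddle connections.

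Second I would invoke the structural theorem for directional flows on compact translation surfaces. Let $\Sigma_\theta \subset R_\theta$ denote the (necessarily finite) union of saddle connections in direction $\theta$. Then $R_\theta \setminus \Sigma_\theta$ decomposes into finitely many open invariant components, each of which is either (a) a maximal cylinder foliated by closed leaves in direction $\theta$, or (b) a minimal component on which $\varphi_t^\theta$ is minimal; moreover, if $\Sigma_\theta = \emptyset$ then all of $R_\theta$ is a single minimal component. This is the point where I would cite the translation surface literature; reproving it from scratch would be the main obstacle, as it rests on the Masur--Katok--Zemlyakov--Veech theory (continued fractions for interval exchanges, or Masur's non-ergodic criterion via the Teichm\"uller flow).

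Third I would translate the three possibilities back to the billiard. If $u \in FV_P$ lifts to a point in a cylinder component of $R_\theta$, then its lifted orbit is closed, so $u$ is periodic and we are in case (i). If $u$ lifts into a minimal component whose closure has nonempty boundary (consisting of saddle connections in $\Sigma_\theta$), then $\overline{\orb}(u)$ projects to an invariant subset of $\partial P$ bounded by generalized diagonals on which the map is minimal, which is precisely a flat strip, giving case (ii). If $u$ lifts to a minimal component with empty saddle-connection boundary, then $\Sigma_\theta = \emptyset$, the whole $R_\theta$ is minimal, and $\omega(u) = R_\theta = R_{\pi_2(u)}$, which is case (iii); in particular this applies whenever $\theta$ is non-exceptional because then $\Sigma_\theta = \emptyset$ by definition.

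Finally I would read off the combinatorial statements in case (iii) directly from the construction of $R_P$. The orbit $\{\pi_2(T^n u)\}_{n \ge 0}$ is contained in the $D_N$-orbit of $\pi_2(u)$; minimality of $T$ on $R_\theta$ and the fact that the $2N$ copies of $P$ forming $R_\theta$ carry distinct direction labels give $\#\{\pi_2(T^n u) : n \ge 0\} = 2N$. At a non-corner point $x \in \partial P$, each side is shared by exactly two of the $2N$ glued copies (one on each side of the edge), so among the $2N$ inward-pointing lifts of $x$ in $R_\theta$ exactly $N$ correspond to directions that push the trajectory into the interior of $P$, giving $\#\{u_0 \in \omega(u): \pi_1(u_0) = x\} = N$. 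The equality of $\pi_2$-fibers over $x$ and $x'$ on the same side follows because the gluing along a fixed side is a single reflection, independent of the point on the side; hence the set of admissible inward directions over $x$ and over $x'$ coincide as subsets of $S^1$.
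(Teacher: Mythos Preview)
The paper does not give a proof of this proposition; it is quoted verbatim from \cite{MT} as a known structural result, so there is nothing in the paper to compare your argument against. Your sketch is the standard route and is essentially correct: unfold via Katok--Zemlyakov to the translation surface $R_\theta$, invoke the cylinder/minimal-component decomposition for the directional flow, and then translate the three cases back to the billiard, reading off the counting statements in case (iii) from the combinatorics of the $2N$ glued copies.

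One small imprecision: in your third step you write that if the minimal component containing $u$ has empty saddle-connection boundary then $\Sigma_\theta=\emptyset$. That is not quite the right dichotomy. It can happen that $\Sigma_\theta\neq\emptyset$ while the complement $R_\theta\setminus\Sigma_\theta$ is a single minimal component; then the closure of the orbit is still all of $R_\theta$ and case (iii) holds even though $\theta$ is exceptional. The proposition only asserts the one-way implication ``non-exceptional $\Rightarrow$ (iii)'', which your argument does cover, so this does not damage the conclusion, but the phrasing ``$\Sigma_\theta=\emptyset$'' should be replaced by ``the minimal component is dense in $R_\theta$''.
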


\begin{corollary}
Let $P$ be rational and $u \in FV_P$, then $u$ is recurrent and the $\omega$-limit set $\omega(u)$ coincides with the
forward orbit closure
$\overline{\orb}(u)$.
\end{corollary}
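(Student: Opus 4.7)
The plan is to derive the corollary directly from the case analysis supplied by Proposition \ref{p-summary}. Since $u\in FV_P$, the forward orbit $\orb(u)$ is defined (it does not reach a corner), and exactly one of the three alternatives (i)--(iii) applies. In every case I will verify the two assertions (a) $u\in\omega(u)$ (recurrence) and (b) $\overline{\orb}(u)\subseteq\omega(u)$; combined with the standard inclusion $\omega(u)\subseteq\overline{\orb}(u)$ this gives equality.

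In case (i), $u$ is periodic, so $\orb(u)$ is a finite invariant set equal to $\omega(u)$, and $u\in\omega(u)$ trivially. In case (ii), the set $S:=\overline{\orb}(u)$ is a flat strip and $T$ is minimal on $S$; since $u\in FV_P$ its forward orbit never hits a corner, so by the minimality clause in the definition of a flat strip, $\orb(u)$ is dense in $S$. Thus $u\in S=\overline{\orb}(u)\subseteq\omega(u)$ (the last inclusion because $\omega(u)$ is closed, forward-invariant, and nonempty containing $u$ after we establish recurrence; in fact density of $\orb(u)$ in $S$ means every point of $S$, including $u$, is a limit of $T^{n_k}u$). In case (iii), $u$ lies in the floor $P\times\pi_2(u)$, hence in $R_{\pi_2(u)}$; by Proposition \ref{p-summary} the map is minimal on (the appropriate $V_P$-restriction of) $R_{\pi_2(u)}=\omega(u)$, and the argument of case (ii) applies verbatim with $R_{\pi_2(u)}$ in place of the flat strip.

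For the second inclusion, once we know $u\in\omega(u)$, then $\orb(u)\subseteq\omega(u)$ by forward invariance of $\omega(u)$, and consequently $\overline{\orb}(u)\subseteq\omega(u)$ since $\omega$-limit sets are closed. The reverse inclusion $\omega(u)\subseteq\overline{\orb}(u)$ is immediate from the definition.

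I do not anticipate a genuine obstacle: the whole content sits in Proposition \ref{p-summary}. The only point requiring a moment's care is that in cases (ii) and (iii) the minimality statement quoted from \cite{MT} is formulated for orbits avoiding corners, which is guaranteed by the hypothesis $u\in FV_P$; this is what lets us conclude that $u$ itself, and not merely a generic point of the invariant component, satisfies the density statement needed for recurrence.
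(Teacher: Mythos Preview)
Your proposal is correct and matches the paper's approach: the corollary is stated there without proof as an immediate consequence of the trichotomy in Proposition~\ref{p-summary}, and you unpack exactly that. The only cosmetic wrinkle is the momentarily circular phrasing in case~(ii), where you invoke $\overline{\orb}(u)\subseteq\omega(u)$ before recurrence is established; the parenthetical ``in fact density of $\orb(u)$ in $S$ means every point of $S$, including $u$, is a limit of $T^{n_k}u$'' is the real argument and should simply be promoted to the main line.
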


\begin{theorem}\label{t-irrat}\cite[Theorem 4.1]{BT}~Let $P$ be irrational and $u \in FV_P$.
\begin{itemize}\item[(i)] If $\pi_2(u)$ is non-exceptional then
$\{\pi_2(T^nu)\colon~n\ge 0\}$ is infinite.
\item[(ii)] If $u$ is not periodic, but visits only a finite
number of floors then ($u$ is uniformly recurrent and) $\overline{\orb}(u)$ is
a flat strip.
\end{itemize}
\end{theorem}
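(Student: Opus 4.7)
The plan is to establish part (ii) first and deduce part (i) from it, since a finite direction set will force the orbit closure to be a flat strip, whose bounding generalized diagonals will contradict the non-exceptionality of $\pi_2(u)$.

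For part (ii), let $\Theta = \{\pi_2(T^nu) : n \ge 0\}$ be the (finite) set of visited directions. The key construction is a compact flat surface $\Sigma$ onto which $\overline{\orb}(u)$ unfolds as a straight-line flow, mimicking the definition of $R_P$ in Section \ref{spb} but using only $|\Theta|$ copies of $P$ rather than $2N$. Specifically, I would take one copy $P_\theta$ of $P$ for each $\theta \in \Theta$, and for each side $e$ of $P$, glue side $e$ of $P_{\theta_i}$ to side $e$ of $P_{\theta_j}$ whenever the reflection across the line through the origin parallel to $e$ sends $\theta_i$ to $\theta_j$. Forward-invariance of $\Theta$ under the reflections actually experienced by the orbit of $u$ makes the gluing consistent; sides whose reflection sends some $\theta_i$ outside $\Theta$ simply remain on the boundary of $\Sigma$, but the orbit of $u$ never reaches them. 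The lifted orbit closure is then a straight-line invariant set in $\Sigma$ in the fixed direction $\pi_2(u)$.

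Once $\Sigma$ is in place, I would apply the standard dichotomy for directional flows on compact flat surfaces: every nonperiodic orbit whose closure is disjoint from the boundary lies in a minimal component bounded by saddle connections. Since $u$ is not periodic but $\Sigma$ has finite area, minimality of the lift of $\overline{\orb}(u)$ together with finiteness of $\Theta$ forces this minimal component to be a single cylinder of parallel closed trajectories; projecting back to $P$ this cylinder is the desired flat strip, and its minimality yields the uniform recurrence of $u$. Part (i) then follows by contradiction: if $\pi_2(u)$ is non-exceptional but $\{\pi_2(T^nu) : n \ge 0\}$ is finite, the non-periodic case gives via (ii) a flat strip whose boundary generalized diagonals would all lie in direction $\pi_2(u)$, contradicting non-exceptionality. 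If instead $u$ is periodic, then finiteness of $\Theta$ together with local continuity of the flow produces a parallel family of periodic orbits about $u$ bounded by generalized diagonals in direction $\pi_2(u)$, leading to the same contradiction.

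The main obstacle is making the flat-surface construction rigorous in the irrational setting. For rational polygons the surface $R_P$ is canonically defined; here, one must check that gluing only along the finitely many reflections preserving $\Theta$ yields a well-defined finite-area translation surface, that the boundary pieces coming from sides not touched by the orbit do not interfere with the orbit closure of $u$, and that the cylinder-decomposition dichotomy can be applied to such a possibly bounded $\Sigma$. Verifying these points, and in particular that the invariant set $\overline{\orb}(u)$ really behaves like a minimal component for the lifted directional flow, is where the bulk of the technical work lies.
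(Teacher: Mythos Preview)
The paper does not prove this theorem; it is quoted from \cite[Theorem 4.1]{BT}, so there is no in-paper argument to compare against. Your overall plan --- unfold along the finitely many visited directions to a compact flat surface $\Sigma$, invoke the structure theorem for the directional flow, and deduce (i) from (ii) via the boundary generalized diagonals --- is the standard route and is sound in outline; the deduction of (i) from (ii) is correct.

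There is, however, a real confusion in your handling of (ii). You write that minimality together with finiteness of $\Theta$ ``forces this minimal component to be a single cylinder of parallel closed trajectories.'' A cylinder of closed trajectories consists of \emph{periodic} orbits, which directly contradicts the hypothesis that $u$ is not periodic. No such collapse occurs, and none is needed: the minimal component containing the lift of $u$ already \emph{is} the flat strip --- look again at the definition in Section~\ref{spb}, where a flat strip is a minimal invariant set bounded by generalized diagonals, not a periodic cylinder. The structure theorem gives you directly that this minimal component is bounded by saddle connections, and these project to the generalized diagonals in $P$; that is the whole content of (ii). Finiteness of $\Theta$ has already done its work in making $\Sigma$ compact and plays no second role. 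One point you should add explicitly is why the boundary of the component is nonempty (condition 3 of the definition): since $P$ is irrational the group $D$ is infinite, so the finite set $\Theta$ cannot be invariant under all side-reflections, hence $\Sigma$ genuinely has unglued edges and the minimal component is a proper subset.
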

Combining Proposition \ref{p-summary} and Theorem \ref{t-irrat}(ii) yields
\begin{corollary}\label{c1}Let $P$ be a polygon and $u\in V_P$ visits a finite number of floors. Then $u$ is uniformly recurrent.\end{corollary}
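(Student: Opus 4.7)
The plan is to split into the rational and irrational cases and apply the two results that the corollary explicitly cites. Because ``$u$ visits only finitely many floors'' already implicitly requires that the forward orbit of $u$ be defined, I will treat $u \in FV_P$ throughout; without this the hypothesis on the floors is vacuous.

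Assume first that $P$ is rational with least common denominator $N$. Then the phase space is partitioned into $2N$ floors, so every $u\in FV_P$ automatically visits only finitely many floors and Proposition~\ref{p-summary} applies. I would go through its three alternatives and verify uniform recurrence in each. In case (i), $u$ is periodic, which is uniformly recurrent by definition. In cases (ii) and (iii), the billiard map is minimal on $\overline{\orb}(u)$, and minimality of the ambient dynamics on the orbit closure of a point is exactly equivalent to that point being uniformly recurrent; this is the standard Birkhoff characterization applied to a continuous map on a compact invariant set. Hence every $u\in FV_P$ in a rational polygon is uniformly recurrent.

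Now suppose $P$ is irrational and $u\in FV_P$ visits only finitely many floors. If $u$ is periodic there is nothing to prove, so assume $u$ is not periodic. Then the hypotheses of Theorem~\ref{t-irrat}(ii) are satisfied (the parenthetical statement in that theorem records exactly what we need), so $u$ is uniformly recurrent and $\overline{\orb}(u)$ is a flat strip. Combining the two cases proves the corollary.

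The proof is essentially bookkeeping, but the one genuine step is the passage ``minimality on $\overline{\orb}(u)$ implies $u$ is uniformly recurrent,'' which I expect to be the only non-cosmetic point. If a tighter statement is desired, I would invoke the fact that on a compact metric space the set of uniformly recurrent points of a continuous map coincides with the union of its minimal sets; since $u\in\overline{\orb}(u)$ and the latter is minimal in cases~(ii)--(iii), $u$ lies in this union. Otherwise no additional ideas are needed beyond directly quoting Proposition~\ref{p-summary} and Theorem~\ref{t-irrat}(ii).
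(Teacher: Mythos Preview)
Your argument is correct and follows exactly the route the paper intends: the paper gives no explicit proof but simply notes that the corollary follows by ``combining Proposition~\ref{p-summary} and Theorem~\ref{t-irrat}(ii),'' and you have spelled out precisely that combination, together with the standard observation that minimality of the map on $\overline{\orb}(u)$ forces $u$ to be uniformly recurrent.
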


Let $G$ be a function defined on a neighborhood of $y$. The derived
numbers $D^+G(y)$, $D_+G(y)$ of $G$ at $y$ are given by $$D^+G(y) =
\limsup_{h\to 0_+}\frac{G(y + h)-G(y)}{h},~D_+G(y) = \liminf_{h\to
0_+}\frac{G(y + h)-G(y)}{h}$$ and the analogous limits from the left
are denoted by $D^-G(y)$, $D_-G(y)$.

Let $(z,y)$ be the coordinates of $\bbr^2$ and let $p_{a,b} \subset
\bbr^2$ be the line with equation $y = a + z \tan b$. For short we
denote $p_{y_0,G(y_0)}$ by $p_{G(y_0)}$. The following useful lemma was proven in \cite{BT}.
\begin{lemma}\label{l-infinitesimal}Let $G\colon~(c,d)\to (-\frac{\pi}{2},\frac{\pi}{2})$ be a continuous function. Fix $C\subset (c,d)$ countable.
Assume that for some $y_0$ one of the four possibilities
\begin{equation*}\label{e10}D^+G(y_0)>0,~
D_+G(y_0)<0,~D^-G(y_0)>0,~D_-G(y_0)<0\end{equation*} is fulfilled.
Then there exists a sequence $\{y_n\}_{n\ge 1}\subset (c,d)\setminus
C$ such that $\lim_ny_n=y_0$ and the set of crossing points
$\{p_{G(y_0)}\cap p_{G(y_n)}\colon~n\ge 1\}$ is bounded in the
$\bbr^2$.
\end{lemma}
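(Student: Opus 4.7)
The plan is to reduce to one case by symmetry and then compute the crossing point explicitly. The four hypotheses are symmetric under swapping left/right and sup/inf, so I focus on $D^+G(y_0)>0$; the others follow from analogous arguments (changing the direction of approach and/or the sign of the inequality).

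Since $D^+G(y_0)=\delta>0$, by definition of $\limsup$ from the right there exist $h_n\searrow 0$ such that
\begin{equation*}
\frac{G(y_0+h_n)-G(y_0)}{h_n}>\frac{\delta}{2},\qquad n\ge 1.
\end{equation*}
I now perturb each $y_0+h_n$ off the countable set $C$. Because $G$ is continuous on $(c,d)$, the map $y\mapsto \frac{G(y)-G(y_0)}{y-y_0}$ is continuous on $(c,d)\setminus\{y_0\}$, so for each $n$ there is an open interval $I_n\subset (c,d)\setminus\{y_0\}$ around $y_0+h_n$, shrinkable so that $\sup I_n\le y_0+h_{n-1}$, on which this quotient stays above $\delta/4$. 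Each $I_n$ is uncountable, so $I_n\setminus C\ne\emptyset$ and I pick $y_n\in I_n\setminus C$. By construction $y_n\to y_0$, $y_n\ne y_0$, and
\begin{equation*}
\frac{G(y_n)-G(y_0)}{y_n-y_0}>\frac{\delta}{4}.
\end{equation*}

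Now I compute the crossing point of the lines $p_{G(y_0)}\colon y=y_0+z\tan G(y_0)$ and $p_{G(y_n)}\colon y=y_n+z\tan G(y_n)$. Since the slope quotient just obtained is nonzero we have $G(y_n)\ne G(y_0)$, hence the two lines are not parallel and the crossing point exists uniquely, with first coordinate
\begin{equation*}
z_n=\frac{y_n-y_0}{\tan G(y_0)-\tan G(y_n)}.
\end{equation*}
By continuity of $G$, the values $G(y_n)$ lie eventually in a compact subinterval of $(-\frac{\pi}{2},\frac{\pi}{2})$, so $\tan'=\sec^2$ is bounded below by a positive constant $\tau>0$ there. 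The mean value theorem applied to $\tan$ gives
\begin{equation*}
|\tan G(y_n)-\tan G(y_0)|\ge \tau\,|G(y_n)-G(y_0)|\ge \tau\cdot\frac{\delta}{4}\,|y_n-y_0|,
\end{equation*}
which yields the uniform bound $|z_n|\le \frac{4}{\tau\delta}$. The second coordinate of the crossing point is $y_0+z_n\tan G(y_0)$, and is therefore also bounded; hence the set of crossing points is bounded in $\bbr^2$.

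The only substantive step is the continuity-based perturbation that avoids $C$ while keeping the slope estimate; everything else is a direct computation. I do not anticipate any real obstacle — the four cases of the hypothesis are handled identically after obvious sign changes, and the uniform positivity of $\sec^2$ on a compact subinterval of $(-\frac{\pi}{2},\frac{\pi}{2})$ is exactly what converts the quantitative growth of $G$ into a quantitative separation of the slopes $\tan G(y_n)$ and $\tan G(y_0)$.
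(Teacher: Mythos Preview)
The paper does not supply its own proof of this lemma; it is quoted from \cite{BT}. Your argument is correct and self-contained: the difference-quotient bound $(G(y_n)-G(y_0))/(y_n-y_0)>\delta/4$ transfers to a lower bound on $|\tan G(y_n)-\tan G(y_0)|$ via the mean value theorem, and the continuity-based perturbation to avoid the countable set $C$ is valid since each open interval $I_n$ is uncountable. One small simplification: since $\sec^2\theta\ge 1$ for all $\theta\in(-\frac{\pi}{2},\frac{\pi}{2})$, you may take $\tau=1$ outright and dispense with the appeal to a compact subinterval.
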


\section{Coding by sides}
For a simply connected $k$-gon $P$ we always consider counterclockwise
numbering of sides $e_1=[p_1,p_2],\dots,e_k=[p_k,p_1]$; we denote $e_i^{\circ}=(p_i,p_{i+1})$.

The symbolic bi-infinite (forward, backward) itinerary of a point $u = (x,\theta)\in BIV_P$ ($u\in FV_P$, $u\in BV_P$) with respect to the sides of $P$ is a sequence
$\sigma(u)=\{\sigma_i(u)\}_{i=-\infty}^{\infty}$, ($\sigma^+(u)=\{\sigma_i(u)\}_{i\ge 0}$, $\sigma^-(u)=\{\sigma_i(u)\}_{i\le 0}$) of numbers from $\{1,\dots,k\}$ defined by
\begin{equation*}\pi_1(T^iu)\in e^{\circ}_{\sigma_i}.\end{equation*}

Let $\Sigma_P := \{ \sigma^+(u) : \ u \in FV_P\}$.
For a sequence $\sigma=\{\sigma_i\}_{i\ge 0} \in \Sigma_P$ we denote by $X(\sigma)$ the set of points from $V_P$ whose symbolic forward itinerary equals to $\sigma$.

 \begin{theorem}\label{t1}\cite{GKT} Let $P$ be a polygons and $\sigma \in \Sigma_P$ be periodic. Then each point from $X(\sigma)$ has a periodic trajectory.\end{theorem}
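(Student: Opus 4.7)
The plan is to unfold the billiard trajectory over one symbolic period and then apply the classification of planar isometries. Let $n\ge 1$ denote the period of $\sigma$, and consider the open cylinder
\[
W=\{u'\in V_P:\ \pi_1(T^iu')\in e^{\circ}_{\sigma_i}\ \text{for}\ 0\le i<n\},
\]
which contains $X(\sigma)$. On $W$ the map $T^n$ is continuous, and by unfolding each of the $n$ bounces as a reflection of the plane across the line carrying the corresponding side, $T^n$ agrees with the natural action on position and direction of a single planar isometry $\Phi$; crucially, $\Phi$ depends only on the first $n$ symbols of $\sigma$ and not on the choice of $u'\in W$.

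Since $\sigma$ is $n$-periodic, $T^n$ sends $X(\sigma)$ into itself, so the forward $\Phi$-orbit of $u$ lies entirely in $X(\sigma)\subset W$. This yields two boundedness constraints: every $\pi_1(\Phi^k u)$ lies on the bounded segment $e_{\sigma_0}$, and every $\pi_2(\Phi^k u)$ lies in the open half-circle of inward-pointing directions at $e_{\sigma_0}$. I would then split into cases according to the type of $\Phi$. A rotation by nonzero angle $\alpha$ is ruled out, since the iterates $\pi_2(\Phi^k u)$ rotate by $k\alpha$ and cannot remain in any open half-circle. A translation by nonzero $v$ forces, via the bounded drift of positions along $e_{\sigma_0}$, the vector $v$ to be parallel to $\pi_2(u)$, and an elementary computation on the straight-line unfolded trajectory then gives $T^n u=u$. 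The identity case gives $T^n u=u$ by the same bounded-drift argument applied to the affine shift on $e_{\sigma_0}$ sending $\pi_1(u)$ to $\pi_1(T^n u)$. The reflection and glide-reflection cases reduce to the identity and translation cases respectively by considering $\Phi^2$, yielding $T^{2n} u=u$.

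The main obstacle is the uniform bookkeeping across the five isometry types and the careful identification of the induced action of $\Phi$ on $V_P$ (together with the orientation sign for the orientation-reversing cases). Once these technicalities are in place, every $u\in X(\sigma)$ satisfies $T^{jn}u=u$ for some $j\in\{1,2\}$, proving that $u$ has a periodic billiard trajectory.
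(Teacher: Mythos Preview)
The paper does not supply a proof of this theorem; it is simply quoted from \cite{GKT}. Your unfolding argument---extract the planar isometry $\Psi$ realized by one symbolic period, use the half-circle constraint on directions to force the linear part $D\Psi$ to be the identity or a reflection, and then use boundedness of the foot points on $e_{\sigma_0}$ to pin down the translational part---is exactly the strategy of the original \cite{GKT} proof, so there is no alternative approach in the present paper to compare against.

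Two small comments on the write-up. The sentence ``$T^n$ agrees with the natural action on position and direction of a single planar isometry $\Phi$'' is imprecise: the direction does transform by $D\Psi^{-1}$, but the new foot point is obtained by intersecting the unfolded straight line with $\Psi(e_{\sigma_0})$ and then applying $\Psi^{-1}$, which is not simply the action of an isometry on the plane. Your subsequent drift computation implicitly uses the correct description, so this is only a matter of wording. Second, the identity and pure-reflection cases are in fact empty: if the $n$-step (respectively $2n$-step) unfolding isometry were the identity, the unfolded straight line, which has positive length, would have to meet the line carrying $e_{\sigma_0}$ at a second point, impossible since the flight direction lies in the open inward half-circle and is therefore transverse to that side. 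Hence the case analysis really collapses to nonzero translation (giving period $n$) and glide reflection (giving period $2n$), and your bookkeeping can be shortened accordingly.
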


We will repeatedly use the following result.

 \begin{theorem}\label{t2}\cite{GKT} Let $P$ be a polygon and $\sigma \in \Sigma_P$ be non-periodic, then the set $X(\sigma)$ consists of one point.\end{theorem}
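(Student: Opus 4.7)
My plan is a proof by contradiction. Suppose $u\ne v$ both lie in $X(\sigma)$. I unfold the two forward trajectories so that each becomes a straight line $L_u, L_v\subset\R^2$, passing through a common sequence $P_0, P_1, P_2, \dots$ of reflected copies of $P$: set $P_0=P$, and let $P_{i+1}$ be obtained from $P_i$ by reflection across the side labelled $\sigma_i$. Both lines enter each $P_i$ through that side. I then split the argument according to whether $\pi_2(u)=\pi_2(v)$.

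If $\pi_2(u)\ne\pi_2(v)$, the two lines $L_u, L_v$ are not parallel, and their perpendicular distance grows linearly in the arc length. At step $n$ both lines must cross the segment $e_{\sigma_n}\subset P_n$, whose length is at most $\dia(P)$. Since the angle between $L_v$ and $e_{\sigma_n}$ is bounded away from $0$ (the orbit hits sides transversally by definition of $V_P$), the spread between the two hit points on the side is controlled by the perpendicular distance up to a bounded factor. For $n$ large, the linear growth exceeds $\dia(P)$, contradicting the common itinerary.

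Assume now $\pi_2(u)=\pi_2(v)$. Then $L_u\ne L_v$ are parallel and bound an open strip $S$ of positive width $w:=|\pi_1(u)-\pi_1(v)|$. Because $u,v$ share the itinerary, no vertex of any $P_i$ may lie inside $S$: otherwise the itineraries of $u$ and $v$ would diverge at the first reflection where a vertex separates them. The strip $S$ has infinite area in the unfolded plane, but projects (by unfolding in reverse, one reflection at a time) into the bounded polygon $P$ as a set of area at most $\mathrm{area}(P)$. An area/pigeonhole argument produces indices $n<m$ such that the rigid motion $\phi_m\circ\phi_n^{-1}$, where $\phi_i:P\to P_i$ is the composition of reflections that produces $P_i$, is a translation $\vec t$. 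The parallelism of $L_u$ with the strip forces $\vec t$ to be parallel to $\pi_2(u)$, so $P_m=P_n+\vec t$.

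To finish, the translational relation $P_m=P_n+\vec t$ propagates both forward and backward along the unfolding sequence: consecutive $P_i$'s are reflections across labelled sides, and translation conjugates reflection across $\ell$ into reflection across $\ell+\vec t$, so $P_{k+(m-n)}=P_k+\vec t$ for every $k\ge 0$. Reading off labels yields $\sigma_{k+(m-n)}=\sigma_k$, so $\sigma$ is purely periodic, contradicting the hypothesis. The main obstacle is the parallel case, specifically extracting the translation $\vec t$: for rational $P$ it is immediate from pigeonhole on the finite dihedral group $D_N$, while for irrational $P$ it requires the area estimate combined with the strip-direction constraint, which rules out any rotational or glide-reflective component of the identifying isometry.
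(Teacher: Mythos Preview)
The paper does not prove this result; it is quoted from [GKT] and used as a black box, so there is no paper's proof to compare against. I assess your sketch directly.

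The non-parallel case is essentially right, though your remark that the angle between $L_v$ and $e_{\sigma_n}$ is bounded away from $0$ is both false (in irrational polygons these angles can accumulate at $0$) and unnecessary: the perpendicular distance from the hit point $a_n\in L_u$ to $L_v$ is at most $|a_n-b_n|\le\dia(P)$ simply because $b_n\in L_v$, while the perpendicular distance grows without bound since the cumulative arc length tends to infinity.

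The parallel case has a genuine gap. Projecting the infinite-area strip back into the finite-area polygon $P$ only shows that the projection is non-injective, which is trivially true (billiard orbits self-intersect in $P$) and does not produce $n<m$ with $\phi_m\circ\phi_n^{-1}$ a translation. Even granting that step (it does follow from pigeonhole on the finite dihedral group when $P$ is rational, though not obviously when $P$ is irrational), your claim that $\vec t$ must be parallel to $L_u$ is unjustified. The isometry $\phi_m\circ\phi_n^{-1}$ carries $P_n$ to $P_m$, but it carries $L_u\cap P_n$ to $\phi_m(\text{$n$th folded segment of $u$})$, and this equals $L_u\cap P_m=\phi_m(\text{$m$th folded segment of $u$})$ exactly when the $n$th and $m$th folded segments coincide in $P$, i.e.\ when $T^nu=T^mu$ --- which is the conclusion you are trying to reach. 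So the step is circular. Your propagation $P_{k+(m-n)}=P_k+\vec t$ is valid once $\vec t\parallel L_u$ is known, but you have not established that.

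A workable replacement is to show that two of the phase-space segments $I_n:=\{(x,\beta_n):x\in[\pi_1(T^nu),\pi_1(T^nv)]\}\subset V_P$ intersect. Any $r\in I_n\cap I_m$ with $n<m$ has forward itinerary equal to both the $n$-shift and the $m$-shift of $\sigma$, so $\sigma$ is eventually periodic; Theorem~\ref{t1} and the injectivity of $T$ then force $u$ itself periodic, hence $\sigma$ periodic. For rational $P$ this intersection follows from pigeonhole on the finitely many pairs $(\sigma_n,\beta_n)$ together with the uniform lower bound on $|I_n|$ on a side of bounded length. For irrational $P$ a further argument is required, and your sketch does not supply one.
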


For $u\in FV_P$ and $m\ge 1$ denote
\begin{equation*}FV_P(u,m)=\{w\in FV_P\colon~\sigma_i(u)=\sigma_i(w),~i=0,\dots,m-1\}\end{equation*}
and define positive numbers $\varepsilon_{i,m}$, $i=1,2$ and $\varepsilon_m$ by
\begin{equation}\label{e13}\varepsilon_{i,m}=\sup\{\varrho_i(w,u)\colon~w\in FV_P(u,m)\},~\varepsilon_m=\max\{\varepsilon_{1,m},\varepsilon_{2,m}\}.\end{equation}

We remind the reader the notion of
an unfolded billiard trajectory. Namely,
instead of reflecting the trajectory in a side of $P$ one may reflect $P$ in
this side and unfold the trajectory to a straight line. As a consequence of Theorem \ref{t2} we obtain

\begin{proposition}\label{p0}~If $u\in FV_P$ is non-periodic then $\lim_m\varepsilon_m=0$.\end{proposition}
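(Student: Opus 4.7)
The plan is to argue by contradiction using the uniqueness provided by Theorem~\ref{t2}.  Since $FV_P(u,m+1) \subseteq FV_P(u,m)$, the sequence $\varepsilon_m$ is non-increasing and converges to some limit $\varepsilon_\infty \ge 0$.  Assume for contradiction that $\varepsilon_\infty > 0$, and for each sufficiently large $m$ select $w_m \in FV_P(u,m)$ with $\varrho(w_m,u) \ge \varepsilon_\infty/2$ (possible because $\varepsilon_m \ge \varepsilon_\infty$ and $\varepsilon_m$ is realized as a supremum).  The $w_m$ all lie in the compact ambient space $\bigcup_{i=1}^{k}\overline{e_i}\times[-\pi/2,\pi/2]$, so a subsequence converges to some $w^*$ with $\varrho(w^*,u) \ge \varepsilon_\infty/2 > 0$, and in particular $w^* \ne u$.

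The central step is to verify that $w^* \in FV_P$ with $\sigma^+(w^*) = \sigma^+(u)$.  Granted this, both $u$ and $w^*$ belong to $X(\sigma^+(u))$; but Theorem~\ref{t2}, applied to the non-periodic sequence $\sigma^+(u)$, forces this set to be a singleton, contradicting $w^* \ne u$.  For the symbolic identity I would pass to the unfolded billiard representation, in which each orbit becomes a straight line in the plane.  Each $w_{m_k}$ is represented by a straight line crossing the first $m_k$ unfolded images of sides of $P$ (a sequence determined only by $\sigma^+(u)$) in their open interiors; the limit line of $w^*$ then crosses each of these unfolded sides in its closure, and an inductive application of continuity of the billiard iterates $T^j$ at $w^*$ (valid whenever the preceding iterates avoid corners) would give $\sigma_j(w^*)=\sigma_j(u)$ for every $j \ge 0$.

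The main obstacle is precisely ruling out the degenerate configuration where the limit line of $w^*$ meets some unfolded side at an endpoint (a corner), so that $T^j w^*$ is undefined for some $j$ and $w^*$ would fail to lie in $FV_P$.  I would exclude this by the elementary geometric observation that two distinct lines in the plane can cross the same infinite sequence of unfolded sides of $P$ (each of length at most $\max_i |e_i|$) only if those sides cluster in a bounded region of the plane---either near the intersection of the lines, or inside a bounded strip between them if the lines are parallel.  Since $u$'s unfolded trajectory escapes linearly to infinity along $\ell_u$, no such clustering is possible, and the limit line of $w^*$ is forced to coincide with $\ell_u$; in the parallel sub-case $u$ and $w^*$ share a direction on a common flat strip and Theorem~\ref{t2} applied at a nearby generic point along the limit line again delivers the contradiction.
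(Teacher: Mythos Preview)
Your overall strategy --- pass to a limit point and invoke Theorem~\ref{t2} --- matches the paper's, and your geometric observation about two distinct lines in the unfolding is exactly how the paper obtains $\lim_m\varepsilon_{2,m}=0$ (the paper simply states this at the outset rather than deferring it).  So after this step any limit point has the form $(x,\pi_2(u))$ and only the positional part $\varepsilon_{1,m}$ can fail to vanish; this is your ``parallel sub-case''.

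The gap is precisely there.  You still need a point distinct from $u$ lying in $FV_P$ with itinerary $\sigma^+(u)$, and your candidate $w^*=(x,\pi_2(u))$ may well hit a corner.  Your proposed remedy --- ``Theorem~\ref{t2} applied at a nearby generic point along the limit line'' --- is not an argument: you do not specify which point, nor why it inherits the full itinerary $\sigma^+(u)$ (and ``flat strip'' has a technical meaning in this paper that is not warranted here).  The paper bypasses the corner issue entirely by taking the \emph{midpoint} $\tilde x$ of $\pi_1(u)$ and $x$ and working with $(\tilde x,\pi_2(u))$ instead of $w^*$.  In the unfolding, $u$'s line meets each of the first $j$ unfolded sides in its open interior, while the parallel line through $x$ meets each in its closure (as a limit of the $w_m$-lines); the parallel line through the midpoint $\tilde x$ therefore meets each in its open interior as well.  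Since $j$ is arbitrary, $(\tilde x,\pi_2(u))\in FV_P$ with $\sigma^+((\tilde x,\pi_2(u)))=\sigma^+(u)$, and Theorem~\ref{t2} gives the contradiction directly --- no corner analysis required.
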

\begin{proof} Unfolding billiard trajectories  immediately yields
$\lim_m\varepsilon_{2,m}=0.$
Note that $\varepsilon_m$ is decreasing and assume that $\varepsilon_0=\lim_m\varepsilon_m>0$. Then necessarily also $\varepsilon_0=\lim_m\varrho_1(u,w_m)$ for some $w_m=(x_m,\theta_m)\in FV_P(u,m)$, i.e., $\lim_m x_m=x\in e_{\sigma_0(u)}$ and $\vert\pi_1(u)-x\vert=\varepsilon_0$. Denoting $\tilde x$ the middle of an arc with the endpoints $\pi_1(u),x$, we get $\sigma^+((\tilde x,\pi_2(u))=\sigma^+(u)$, what is impossible by Theorem \ref{t2}.
\end{proof}

We let to the reader the verification of the following fact.

\begin{proposition}\label{p4}~Let $P$ be a polygon. For every $\delta>0$ there exists an $m=m(\delta)\in\bbn$ such that whenever $u,\tilde u\in V_P$ satisfy $\varrho_2(u,\tilde u)>\delta$ and for some $n$, $\vert n\vert\ge m$, the symbols $\sigma_n(u),\sigma_n(\tilde u)$ exist, then $\sigma_n(u)\neq \sigma_n(\tilde u)$.\end{proposition}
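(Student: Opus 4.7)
I plan to argue the contrapositive by contradiction using Theorems~\ref{t1} and~\ref{t2}. Suppose $u,\tilde u\in V_P$ satisfy $\varrho_2(u,\tilde u)>\delta$ and $\sigma_n(u)=\sigma_n(\tilde u)$ for every $n$ with $|n|\ge m$ at which both symbols exist. I may take $u,\tilde u\in BIV_P$ (else the hypothesis is essentially vacuous). Setting $v=T^m u$ and $\tilde v=T^m\tilde u$, the hypothesis translates into $\sigma^+(v)=\sigma^+(\tilde v)$.

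If this common forward code is non-periodic, Theorem~\ref{t2} yields $X(\sigma^+(v))=\{v\}$, and since $\tilde v\in X(\sigma^+(\tilde v))=X(\sigma^+(v))$, we get $\tilde v=v$; invertibility of $T^m$ on bi-infinite orbits then gives $u=\tilde u$, contradicting $\varrho_2(u,\tilde u)>\delta$.

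If the common forward code has period $p$, Theorem~\ref{t1} forces $v,\tilde v$, hence $u,\tilde u$, to be periodic. For any $n\in\bbz$, choose $k$ with $n+kp\ge m$; then by periodicity of the code and the hypothesis,
$$\sigma_n(u)=\sigma_{n+kp}(u)=\sigma_{n+kp}(\tilde u)=\sigma_n(\tilde u),$$
so $u$ and $\tilde u$ have identical bi-infinite codes. Two periodic orbits with the same code must be parallel: unfolding produces two straight segments passing through the same ordered sequence of reflected copies of $P$ and closing under the same composition of side-reflections $g\in D$, so the closing direction is the unique direction fixed by $g$ and is the same for both. Thus $\pi_2(u)=\pi_2(\tilde u)$, again contradicting $\varrho_2(u,\tilde u)>\delta$.

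The main obstacle is the periodic case, specifically verifying that two periodic billiard orbits with identical symbolic codes share the same direction. As sketched above, I would justify this via the unfolding construction and the fixed-direction constraint imposed by the common closing isometry $g\in D$ determined by the code; granting this, the dichotomy between Theorems~\ref{t1} and~\ref{t2} handles both cases uniformly and no large $m$ is actually needed, which is consistent with the proposition being the kind of statement one can leave to the reader.
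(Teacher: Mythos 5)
The paper offers no proof of this proposition (it is explicitly left to the reader), so your attempt can only be measured against what the statement has to say for its later use. Measured that way, there is a genuine gap: you have negated the wrong quantifier. The conclusion of the proposition is that the symbols differ at \emph{each} $n$ with $\vert n\vert\ge m$ at which they both exist --- this is exactly how it is invoked in the proof of Lemma \ref{l1}, where $\sigma_{m(1)-m}(u)\neq\sigma_{m(2)-m}(u)$ is asserted for one specific index --- and at the very least it must say that $u,\tilde u$ cannot share a common code block of length $m$. Your contrapositive instead assumes $\sigma_n(u)=\sigma_n(\tilde u)$ for \emph{all} $\vert n\vert\ge m$, which negates only ``the symbols differ for \emph{some} $\vert n\vert\ge m$''. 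What you then prove is a rigidity statement (two bi-infinite orbits whose codes eventually agree in both time directions coincide or are parallel), which is true but much weaker, and this is why no $m(\delta)$ emerges from your argument. Your closing remark that ``no large $m$ is actually needed'' should have been a warning sign: the proposition is genuinely quantitative. In a square, two directions differing by a small $\delta$ generate codes that agree on an initial block whose length blows up as $\delta\to 0$, so necessarily $m(\delta)\to\infty$; a proof in which $\delta$ plays no role cannot be establishing the stated uniform bound.

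The missing idea is an unfolding estimate. If $\sigma_k(u)=\sigma_k(\tilde u)$ for $k=0,\dots,n$, the two unfolded trajectories are straight lines crossing the same chain of $n+1$ windows (unfolded copies of sides), each window of length at most $\mathrm{diam}\,P$, and the angle between the two lines is exactly $\varrho_2(u,\tilde u)$. Since consecutive collisions can accumulate only near a corner, and the number of consecutive collisions near a corner of angle $\alpha$ is bounded in terms of $\pi/\alpha$, the chain of windows has extent at least $c\,n$ for some $c=c(P)>0$; two lines meeting both the first and the last window therefore have directions within $O(1/n)$ of one another. Choosing $m$ with this bound below $\delta$ proves the proposition in the block form that Lemma \ref{l1} needs. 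This quantitative step is entirely absent from your proposal. A secondary point: in your periodic case the closing isometry $g$ need not fix a unique direction (for even-period orbits its linear part is the identity), so the parallelism of periodic orbits sharing a code should be justified by the periodic-cylinder structure behind Theorem \ref{t1} rather than by a fixed-direction argument.
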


An increasing sequence $\{n(i)\}_{i\ge 0}$ of positive integers is called syndetic if the sequence $\{n(i+1)-n(i)\}_{i\ge 0}$ is bounded.
A symbolic itinerary $\sigma^+$ is said to be ({\it uniformly}) recurrent if for every initial word $(\sigma_0,\dots,\sigma_{m-1})$ there is a ({\it syndetic}) sequence $\{n(i)\}_{i\ge 0}$ such that
$(\sigma_{n(i)}, \cdots, \sigma_{n(i) + m - 1}) = (\sigma_0,\cdots,\sigma_{m-1})$ for all $i $.
For a polygon $P$ and billiard map $T\colon~V_P\to V_P$, a point $u=(x,\theta)\in FV_P$ is said to be ({\it uniformly}) recurrent if for every $\varepsilon>0$ there is a ({\it syndetic}) sequence $\{n(i)\}_{i\ge 0}$ such that $$\varrho(T^{n(i)}u,u)<\varepsilon$$ for each $i$.

It is easy to see that a ({\it uniformly}) recurrent point $u$ has a ({\it uniformly}) recurrent symbolic itinerary. It is a consequence of Theorems \ref{t1},\ref{t2} that the opposite implication also holds true.

 \begin{proposition}\label{p1}~Let $P$ be a polygon and $u\in FV_P$. Then $\sigma^+(u)$ is (uniformly) recurrent if and only
 if $u$ is (uniformly) recurrent.\end{proposition}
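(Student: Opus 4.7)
The forward implication (recurrent point implies recurrent itinerary) is the observation preceding the statement, so the content of the proposition is the reverse implication. My plan is to split on whether $\sigma^+(u)$ is periodic and apply Theorems \ref{t1}, \ref{t2} together with Proposition \ref{p0}.

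Suppose first that $\sigma^+(u)$ is periodic. Since $u\in X(\sigma^+(u))$, Theorem \ref{t1} gives that $u$ has a periodic billiard trajectory. A periodic point is manifestly uniformly recurrent (the return times form an arithmetic progression), so both conclusions hold for free.

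Now assume $\sigma^+(u)$ is non-periodic and (uniformly) recurrent. By Theorem \ref{t2} the set $X(\sigma^+(u))$ reduces to the single point $u$, so in particular $u$ is non-periodic and Proposition \ref{p0} applies: $\lim_{m\to\infty}\varepsilon_m=0$ with $\varepsilon_m$ as in (\ref{e13}). Given $\varepsilon>0$, choose $m=m(\varepsilon)$ so large that $\varepsilon_m<\varepsilon$. By the (uniform) recurrence of $\sigma^+(u)$ applied to the initial word $(\sigma_0(u),\dots,\sigma_{m-1}(u))$, there is a (syndetic) sequence $\{n(i)\}_{i\ge 0}$ with
\[
(\sigma_{n(i)}(u),\dots,\sigma_{n(i)+m-1}(u))=(\sigma_0(u),\dots,\sigma_{m-1}(u)).
\]
But this is exactly the statement that $T^{n(i)}u\in FV_P(u,m)$, so by the definition of $\varepsilon_m$,
\[
\varrho(T^{n(i)}u,u)\le \varepsilon_m<\varepsilon
\]
for every $i$. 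Hence $u$ is (uniformly) recurrent, as required.

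The only step with any content is the passage from symbolic proximity to metric proximity, and that is exactly what Proposition \ref{p0} provides once Theorem \ref{t2} has eliminated the ambiguity in $X(\sigma^+(u))$; the syndetic/non-syndetic dichotomy is preserved automatically because we use the \emph{same} sequence $\{n(i)\}$ supplied by the symbolic recurrence. I do not foresee any genuine obstacle; the main point is to recognize that $\varepsilon_m\to 0$ converts the combinatorial statement into the metric one uniformly in $i$.
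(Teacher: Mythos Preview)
Your proof is correct and follows essentially the same approach as the paper: split on whether $\sigma^+(u)$ is periodic, invoke Theorem~\ref{t1} in the periodic case, and in the non-periodic case use Proposition~\ref{p0} to convert symbolic agreement on the first $m$ letters into $\varrho$-proximity via $\varepsilon_m\to 0$, passing the same (syndetic) return sequence through. Your explicit appeal to Theorem~\ref{t2} to justify that $u$ is non-periodic before applying Proposition~\ref{p0} is a small clarification the paper leaves implicit, but otherwise the arguments coincide.
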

\begin{proof} Suppose $\sigma^+(u)$ is (uniformly) recurrent.  By Theorem \ref{t1} we are done if $\sigma^+(u)$ is periodic. If it is non-periodic, Proposition \ref{p0} says that $\lim_m\varepsilon_m=0$, where $\varepsilon_m$ were defined in (\ref{e13}).
Choose an $\varepsilon>0$. Then $\varepsilon_m<\varepsilon$ for some $m$ and we can consider a (syndetic) sequence $\{n(i,m)\}_{i\ge 0}$ corresponding to the initial word $(\sigma_0,\dots,\sigma_{m-1})$ of $\sigma^+(u)$. Clearly, $$\varrho(T^{n(i,m)}u,u)\le \varepsilon_m<\varepsilon$$ for each $i$. The converse is clear.
\end{proof}

\section{Code Equivalence}\label{sce}

\begin{definition}\label{d1}
We say that polygons  $P,Q$ are code equivalent if there are points $u\in FV_P$, $v\in FV_Q$ such that
\begin{itemize}

   \item[(C1)] $\overline{\{\pi_1(T^nu)\}}_{n\ge 0}=\partial  P$, $\overline{\{\pi_1(S^nv)\}}_{n\ge
   0}=\partial  Q$,
     \item[(C2)] the symbolic forward itineraries $\sigma^+(u)$, $\sigma^+(v)$ are the same;
  \end{itemize}
  the points $u,v$ will be sometimes called the leaders.
 \end{definition}

Clearly any two rectangles are code equivalent, and also two code equivalent polygons $P,Q$ have the same number of sides. In this case we always consider their counterclockwise numbering $e_1=[p_1,p_2],\dots,e_k=[p_k,p_1]$ for $P$, resp. $f_1=[q_1,q_2],\dots,f_k=[q_k,q_1]$ for $Q$. We sometimes write $e_i\sim f_i$ to emphasize the correspondence of sides $e_i,f_i$.
The verification that this relation is reflexive, symmetric and
transitive is left to the reader.

 \begin{definition}\label{d3}Let $P$ be a polygon and $u,\tilde u\in FV_P$. We say that trajectories of $u,\tilde u$ intersect before their symbolic separation if either
 \begin{itemize}\item[(p)] for some positive integer $\ell$,  $\sigma_{\ell}(u)\neq\sigma_{\ell}(\tilde u)$, $$\sigma_k(u)=\sigma_k(\tilde u)~\text{ whenever }k\in\{0,\dots,\ell-1\}$$  and for some $k_0\in\{0,\dots,\ell-1\}$, the segments with endpoints $$\pi_1(T^{k_0}u),\pi_1(T^{k_0+1}u)\text{ and }\pi_1(T^{k_0}\tilde u),\pi_1(T^{k_0+1}\tilde u)$$ intersect; or \item[(n)] for some negative integer $\ell$,  $\sigma_{\ell}(u)\neq\sigma_{\ell}(\tilde u)$, $$\sigma_k(u)=\sigma_k(\tilde u)~\text{ whenever }k\in\{\ell+1,\dots,0\}$$  and for some $k_0\in\{\ell,\dots,-1\}$, the segments with endpoints $$\pi_1(T^{k_0}u),\pi_1(T^{k_0+1}u)\text{ and }\pi_1(T^{k_0}\tilde u),\pi_1(T^{k_0+1}\tilde u)$$ intersect.\end{itemize}
\end{definition}
 For $u\in FV_P$, a side $e$ of $P$ and $\theta\in (-\frac{\pi}{2},\frac{\pi}{2})$ we put

\begin{equation}\label{e16}I(u,e,\theta)=\{n\in\bbn\cup\{0\}\colon~\pi_1(T^nu)\in e,~\pi_2(T^nu)=\theta\}.\end{equation}

Throughout the section let $u_n=T^nu$, $x_n=\pi_1(u_n)$, $v_n=S^nv$, $y_n=\pi_1(v_n)$.
\begin{proposition}\label{p5}
Let polygons $P,Q$ be code equivalent with leaders $u,v$, $u$ recurrent. For any $m,n\in I(u,e,\theta)$, the trajectories of $S^mv,S^nv$ cannot intersect before their symbolic separation.\end{proposition}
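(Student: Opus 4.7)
The plan is to argue by contradiction, addressing case (p) of Definition~\ref{d3} (case (n) is symmetric by time reversal): assume there exist $\ell>0$ and $k_0\in\{0,\ldots,\ell-1\}$ for which the symbolic itineraries of $S^mv$ and $S^nv$ agree on $\{0,\ldots,\ell-1\}$, disagree at $\ell$, and the segments $[y_{m+k_0},y_{m+k_0+1}]$ and $[y_{n+k_0},y_{n+k_0+1}]$ meet at some interior point $w\in Q$.

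The first observation is that in $P$ the corresponding segments are parallel and disjoint. Since $m,n\in I(u,e,\theta)$, both $u_m$ and $u_n$ lie on $e$ with the common direction $\theta$; and since $\sigma^+(u)=\sigma^+(v)$, the itineraries of $T^mu$ and $T^nu$ also agree on $\{0,\ldots,\ell-1\}$. Sharing this initial direction and this sequence of reflections, the two forward trajectories in $P$ are parallel translates of one another for the first $\ell$ iterates, so the segments $[x_{m+k_0},x_{m+k_0+1}]$ and $[x_{n+k_0},x_{n+k_0+1}]$ cannot meet. The crossing in $Q$ conversely forces $\pi_2(v_{m+k_0})\ne\pi_2(v_{n+k_0})$, and tracing the same reflections backwards gives $\pi_2(v_m)\ne\pi_2(v_n)$.

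I would then invoke recurrence: by Proposition~\ref{p1} the recurrence of $u$ transfers to $v$, and Proposition~\ref{p0} transports symbolic matches into phase-space closeness in both polygons. Picking $j_\nu\to\infty$ at which the prefix $\sigma_0\cdots\sigma_{L_\nu-1}$ reappears with $L_\nu\to\infty$, one gets $T^{j_\nu+i}u\to u_i$ and $S^{j_\nu+i}v\to v_i$ uniformly for bounded $i$. As the crossing at $w$ is an open condition on the pair of relevant segments, the segments of $S^{j_\nu+m}v$ and $S^{j_\nu+n}v$ indexed by $k_0$ also cross near $w$ for large $\nu$.

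The final step, which I expect to be the main obstacle, is to convert this coexistence of parallel-in-$P$ and crossing-in-$Q$ pictures into an actual contradiction. My plan is to combine the density of $\{x_j:j\in I(u,e,\theta)\}$ in $e$ from Proposition~\ref{p-summary}(iii) with Proposition~\ref{p0}: if $x_j$ is close to $x_m$ on $e$ and $\pi_2(u_j)=\theta$, then $u_j$ is close to $u_m$ in $V_P$, forcing a long common symbolic prefix and hence $v_j$ close to $v_m$ in $V_Q$. This makes the direction map $j\mapsto\pi_2(v_j)$ on $I(u,e,\theta)$ continuous in $x_j$, and one hopes to extend it to a continuous $\Phi:e\to(-\pi/2,\pi/2)$ with $\Phi(x_m)\ne\Phi(x_n)$. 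The intermediate value theorem combined with the open crossing at $w$ should then produce two distinct points in $V_Q$ sharing the same forward symbolic itinerary, contradicting Theorem~\ref{t2}.
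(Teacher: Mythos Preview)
Your proposal has two genuine gaps.

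First, dismissing case (n) as ``symmetric by time reversal'' is incorrect and misses the whole point of the recurrence hypothesis. Code equivalence gives $\sigma^+(u)=\sigma^+(v)$ only; the backward itineraries need not agree. In case (n) the separation index $\ell$ is negative, so $m+\ell$ or $n+\ell$ may be negative and you cannot compare $\sigma_{m+\ell}(u)$ with $\sigma_{m+\ell}(v)$. The paper's use of recurrence is precisely here: one approximates $v_m,v_n$ by forward iterates $v_{m(1)},v_{n(1)}$ with $m(1),n(1)>-\ell$, so that all the relevant indices $m(1)+\ell,\,n(1)+\ell$ are non-negative and the equality $\sigma^+(u)=\sigma^+(v)$ applies. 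You invoke recurrence in case (p), where it is not needed, and omit it in case (n), where it is essential.

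Second, your ``main obstacle'' paragraph over-engineers the contradiction and in doing so smuggles in unproved hypotheses. You appeal to Proposition~\ref{p-summary}(iii) for density of $\{x_j:j\in I(u,e,\theta)\}$ in $e$, but that requires $P$ rational and $u$ non-exceptional, neither of which is assumed in Proposition~\ref{p5}. Moreover the continuous-extension map you want to build is essentially Corollary~\ref{c2}, which is a \emph{consequence} of this proposition. The actual contradiction is immediate once you have ``parallel in $P$, crossing in $Q$'': unfolding, two parallel segments in $P$ keep their transverse order up to step $\ell$, while two crossing segments in $Q$ swap their order by step $\ell$; hence
\[
\sgn\big(\sigma_{m+\ell}(u)-\sigma_{n+\ell}(u)\big)\neq\sgn\big(\sigma_{m+\ell}(v)-\sigma_{n+\ell}(v)\big),
\]
which is impossible since $\sigma^+(u)=\sigma^+(v)$. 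No density, no intermediate value theorem, and no appeal to Theorem~\ref{t2} are needed.
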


\begin{figure}[h]
\begin{minipage}[ht]{0.49\linewidth}
\centering
\begin{tikzpicture}[scale=0.7]
  \draw [very thick] (0,0)
    -- node [below] {} ++ (-2,0)
    -- node [left] {} ++ (0,2)
    -- node [below] {} ++ (2,0)
    -- node [below] {} ++ ( 0,-2 )
    -- node [below] {} ++ (2,0)
    -- node [below] {} ++ (0,2)
    -- node [below] {} ++ (-2,0)
    -- node [left] {} ++ (0,2)
    -- node [below] {} ++ (2,0)
    -- node [below] {} ++ (0,-2);
    \draw [very thick] (0,4)
-- node [below]{} (0,6)
-- node [below]{} (2,6)
-- node [below]{} (2,4);  

  \draw [very thick, ->>] (2,5.52) -- (-0.3,0);
  \draw [very thick, ->>]  (1.8,6) -- (-0.7,0);

 \draw (-0.7,-.5) node {\Tiny\hskip-0.5cm$x_{m(1)}$};
 \draw (0.4,-.5) node {\Tiny\hskip-0.5cm$x_{n(1)}$};
 \draw (1.8,6.5) node {\Tiny\hskip-0.5cm$x_{m(1)+\ell}$};
  \draw (3.3,5.35) node {\Tiny\hskip-0.5cm$x_{n(1)+\ell}$};

       \end{tikzpicture}
       \end{minipage}
\begin{minipage}[ht]{0.49\linewidth}
\centering
\begin{tikzpicture}[scale=0.7]
  \draw [very thick] (0,0)
    -- node [below] {} ++ (-2,0)
    -- node [left] {} ++ (0,2)
    -- node [below] {} ++ (2,0)
    -- node [below] {} ++ ( 0,-2 )
    -- node [below] {} ++ (2,0)
    -- node [below] {} ++ (0,2)
    -- node [below] {} ++ (-2,0)
    -- node [left] {} ++ (0,2)
    -- node [below] {} ++ (2,0)
    -- node [below] {} ++ (0,-2);
  \draw [very thick] (0,4)
-- node [below]{} (0,6)
-- node [below]{} (2,6)
-- node [below]{} (2,4);  

  \draw [very thick, ->>] (2,5.52) --  (-0.7,0); 
  \draw [very thick, ->>]  (1.8,6) --(-0.3,0);

     \draw (-0.7,-.5) node {\Tiny\hskip-0.5cm$y_{m(1)}$};
 \draw (0.4,-.5) node {\Tiny\hskip-0.5cm$y_{n(1)}$};
 \draw (1.8,6.5) node {\Tiny\hskip-0.5cm$y_{n(1)+\ell}$};
  \draw (3.3,5.35) node {\Tiny\hskip-0.5cm$y_{m(1)+\ell}$};

       \end{tikzpicture}
       \end{minipage}
\caption{\mbox{Parallel versus Crossing with $k_0 = -2$}, the $(n)$-increasing case}
\label{fig2}\end{figure}

\begin{proof} The case when $x_m<x_n$ and $y_m<y_n$, resp. $y_n<y_m$ will be called increasing, resp. decreasing. Thus, using the two parts of Definition \ref{d3} and assuming that the conclusion is not true we can distinguish the following four possibilities: (p)-increasing, (p)-decreasing, (n)-decreasing and (n)-increasing. Let us prove the (n)-increasing case. In this case there are $m,n\in I(u,e,\theta)$, some negative $\ell,k_0$ such that
$$x_m<x_n,~y_m<y_n$$
 and the second part (n) of Definition \ref{d3} is fulfilled. 
 
 Note that we have only assumed that the forward iterates of $x$ and $y$ have the same code, but in the $(n)$-increasing
 case we want to exclude the intersection of their backwards orbits.  We overcome this problem by approximating $x_m$ and $y_n$
 by their forward orbits.  This can be done since the leader $u$ is recurrent, hence  by Proposition \ref{p1}  $v$ is also recurrent.
We consider sufficiently large integers $m(1),n(1)\in (-\ell,\infty)$ such that $v_{m(1)}$, resp. $v_{n(1)}$ approximates $v_m$, resp. $v_n$. Then $\sigma_{m(1)+\ell}(v)\neq\sigma_{n(1)+\ell}(v)$, $\sigma_{m(1)+k}(v)=\sigma_{n(1)+k}(v)$ whenever $k\in\{\ell+1,\dots,0\}$; since for some $k_0\in\{\ell,\dots,-1\}$, the segments with endpoints $$\pi_1(T^{m(1)+k_0}v),\pi_1(T^{m(1)+k_0+1}v)\text{ and }\pi_1(T^{n(1)+k_0}v),\pi_1(T^{n(1)+k_0+1}v)$$ intersect and the points $u_{m(1)},u_{n(1)}$ are (almost) parallel, we get
$$\sgn~(\sigma_{m(1)+\ell}(u)-\sigma_{n(1)+\ell}(u))\neq \sgn~(\sigma_{m(1)+\ell}(v)- \sigma_{n(1)+\ell}(v)),$$
what is not possible for the leaders $u,v$. The other three cases are analogous.\end{proof}

In the last part of this section we present Corollaries \ref{c2}-\ref{cl} of Proposition \ref{p5} under the following

\begin{assumption}\label{a1}Let $P$,$Q$ be code equivalent polygons with leaders $u,v$ and the set of directions $\{\pi_2(T^nu)\colon~n\ge 0\}$ along the trajectory of $u$ is finite.\end{assumption}

When proving Corollaries \ref{c2}-\ref{cl} we denote    $\alpha_n=\pi_2(u_n)$, $\beta_n=\pi_2(v_n)$. By Definition \ref{d1}(C1) the first projection of the
forward trajectory of $u$, resp. of $v$ is dense in $\partial  P$, resp. $\partial Q$, so in particular,
neither $u$ nor $v$ is periodic. In any case, the set $I(u,e,\theta)$ defined for a side $e=e_i$ in (\ref{e16})
is nonempty only for $\theta$'s from the set $\{\pi_2(T^nu): n \ge 0 \}$ which is assumed to be finite.
In what follows we fix such $e$ and $\theta$.

Applying Corollary \ref{c1} and Proposition \ref{p1} we obtain that both the leaders $u$ and $v$ are uniformly recurrent.

Obviously the set
\begin{equation}\label{e12}\mathcal J(e,\theta)=\overline{\{y_n\colon~n\in I(u,e,\theta)\}}\end{equation}
is a perfect subset of a side $f=f_i\sim e$. The counterclockwise orientation of $\partial Q$ induces the linear ordering of $f$ and we can consider two elements $\min \mathcal J(e,\theta),\max \mathcal J(e,\theta)\in f$.

Define a function $g\colon~\{y_n\}_{n\in I(u,e,\theta)}\to (-\frac{\pi}{2},\frac{\pi}{2})$ by $g(y_n)=\beta_n$.

\begin{corollary}\label{c2}The function $g$ can be extended continuously to the map $G\colon~\mathcal J(e,\theta)\to [-\frac{\pi}{2},\frac{\pi}{2}]$. Moreover, $G(y)\in (-\frac{\pi}{2},\frac{\pi}{2})$ for each $$y\in \mathcal J(e,\theta) \setminus \{\min \mathcal J(e,\theta),\max \mathcal J(e,\theta)\}.$$\end{corollary}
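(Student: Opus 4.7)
My plan is to establish the continuous extension via a Cauchy criterion, using Proposition~\ref{p4} to bound the time of symbolic separation from below in terms of the directional gap and Proposition~\ref{p5} to forbid an early geometric crossing of the $v$-trajectories in $Q$. Suppose, aiming at a contradiction, that two sequences of indices $m(i), n(i) \in I(u,e,\theta)$ satisfy $y_{m(i)}, y_{n(i)} \to y^{\ast} \in \mathcal J(e,\theta)$ while $\beta_{m(i)} \to \beta'$ and $\beta_{n(i)} \to \beta''$ with $\beta' \neq \beta''$. Setting $\delta = |\beta'-\beta''|/2$, Proposition~\ref{p4} forces the symbols $\sigma_k(v_{m(i)}), \sigma_k(v_{n(i)})$ to differ at some index of absolute value at most $m(\delta)$, so the symbolic separation time of $v_{m(i)}$ and $v_{n(i)}$ is uniformly bounded.

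Parametrize the side $f$ as the real axis with coordinates $a_i := y_{m(i)}$, $b_i := y_{n(i)} \to y^{\ast}$. A direct computation shows that the forward rays from $v_{m(i)}, v_{n(i)}$ in $Q$ meet in $\mathbb R^{2}$ at parameter
\[
t_i = \frac{(b_i - a_i)\cos\beta_{n(i)}}{\sin(\beta_{m(i)} - \beta_{n(i)})},
\]
with an analogous expression for the backward rays. Since the denominator stays bounded away from zero while $b_i - a_i \to 0$, we have $|t_i| \to 0$; a sign analysis based on the signs of $b_i - a_i$ and $\beta_{m(i)} - \beta_{n(i)}$ shows that exactly one of the two pairs of rays (forward or backward) meets at a positive parameter. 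Provided $y^{\ast}$ is interior to $f$ and the limit trajectory $(y^{\ast}, \beta')$ is not directed at a corner of $Q$ (arranged, if necessary, by passing to further subsequences within the uncountable set $I(u,e,\theta)$, exploiting the countability of corner-directions), the first (respectively last) segment of each of $v_{m(i)}, v_{n(i)}$ has length bounded below for large $i$, so the intersection point lies strictly inside the relevant segments. Because by construction symbolic separation has not yet occurred in these segments, this contradicts Proposition~\ref{p5} in either its (p) or (n) case. Hence $\beta_{n_k}$ converges, defining a continuous extension $G \colon \mathcal J(e,\theta) \to [-\pi/2, \pi/2]$; at the endpoints $\min \mathcal J, \max \mathcal J$ the approach is only one-sided and the limit may saturate $\pm \pi/2$.

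For the second claim, suppose $y^{\ast} \in \mathcal J \setminus \{\min \mathcal J, \max \mathcal J\}$ has $G(y^{\ast}) = \pi/2$ (the case $-\pi/2$ is symmetric). Because $g$ only takes values in the open interval $(-\pi/2, \pi/2)$, the extension $G$ cannot be identically $\pi/2$ on any open subset of $\mathcal J$; hence there exists $y^{\#} \in \mathcal J$ with $G(y^{\#}) < \pi/2$, and by continuity and density of $\{y_n\}_{n\in I(u,e,\theta)}$ I can pick $y_{n_0}$ close to $y^{\#}$ with $\beta_{n_0}$ bounded away from $\pi/2$, together with a sequence $y_n \to y^{\ast}$ having $\beta_n \to \pi/2$. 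Running the same ray-intersection calculation on the pair $(v_n, v_{n_0})$ yields parameters $t \to |y^{\ast} - y_{n_0}|$ on the $v_n$ side and $s \to 0$ on the $v_{n_0}$ side. The key observation is that the nearly-tangent limit trajectory $(y^{\ast}, \pi/2)$ travels along $f$ until meeting the corner at the relevant end of $f$, so the first (respectively last) segment of $v_n$ reaches distances exceeding $|y^{\ast} - y_{n_0}|$ for $n$ large, provided $y^{\#}$ lies on the side of $y^{\ast}$ corresponding to the tangent direction $\pi/2$ (which can always be arranged, using both sides of $y^{\ast}$ inside $\mathcal J$). Thus the intersection lies inside the relevant segments before the bounded-time symbolic separation, again contradicting Proposition~\ref{p5}.

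The main technical obstacle throughout is excluding corner-directed limit trajectories, where the relevant segment length may fail to be bounded below. I expect to dispatch these pathologies by a genericity argument exploiting the countability of corner directions against the uncountably many candidate indices in $I(u,e,\theta)$ produced by uniform recurrence of $v$ (Corollary~\ref{c1} and Proposition~\ref{p1}).
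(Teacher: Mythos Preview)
Your approach is essentially the paper's: contradict Proposition~\ref{p5} by exhibiting a crossing of two $v$--trajectories before their symbolic separation. A few remarks on execution.

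The detour through Proposition~\ref{p4} is unnecessary. Since your crossing parameter $t_i\to 0$, the intersection lands in the very first forward (or backward) segment, while symbolic separation occurs at some step $|\ell|\ge 1$ automatically because $\sigma_0(v_{m(i)})=\sigma_0(v_{n(i)})$. No upper bound on the separation time is needed.

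Your proposed handling of ``corner--directed'' limits contains an actual error: $I(u,e,\theta)\subset\mathbb N$ is countable, so there are no ``uncountably many candidate indices'' to play off against countably many bad directions; moreover the limits $\beta',\beta''$ are determined by the contradiction hypothesis and cannot be moved by passing to subsequences. Fortunately the worry is largely moot. When $\beta_{n(k)}\to\pm\pi/2$ the trajectory becomes nearly tangent and its first segment becomes long, not short; and for the fixed comparison point the first segment length is bounded below simply because its direction lies strictly in $(-\pi/2,\pi/2)$.

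For the second claim the paper's argument is cleaner than yours: since $y$ is not an extreme point of $\mathcal J(e,\theta)$, one picks orbit points $y_{n(i)}<y<y_{n(j)}$ directly (both with directions strictly in $(-\pi/2,\pi/2)$), and then a nearly--tangent $v_{n(k)}$ close to $(y,\pm\pi/2)$ crosses the trajectory from the appropriate side before separation. This avoids your search for an auxiliary $y^{\#}$ with $G(y^{\#})$ bounded away from $\pi/2$.
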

\begin{proof} Put $G(y_n)=\beta_n$.  Proposition \ref{p5} clearly shows that for $n(k)\in I(u,e,\theta)$,\begin{equation*}\label{e20}y_{n(k)}\rightarrow_k y\in \mathcal J(e,\theta)~\text{ implies }\beta_{n(k)}\to \beta\in [-\frac{\pi}{2},\frac{\pi}{2}]\end{equation*} and we can put $G(y)=\beta$.

 Let $y\in \mathcal J(e,\theta) \setminus \{\min \mathcal J(e,\theta),\max \mathcal J(e,\theta)\}$ and choose $y_{n(i)},y_{n(j)}$ such that \begin{equation}\label{e15}y\in (y_{n(i)},y_{n(j)}).\end{equation}

If $G(y)=-\frac{\pi}{2}$, resp. $G(y)=\frac{\pi}{2}$ then by (\ref{e15}) and the continuity of $G$, for some $v_{n(k)}$ sufficiently close to $(y,-\frac{\pi}{2})$, resp. $(y,\frac{\pi}{2})$, the trajectories of $v_{n(j)},v_{n(k)}$, resp.  $v_{n(i)},v_{n(k)}$ intersect before their symbolic separation, what contradicts Proposition \ref{p5}. Thus $G(y)\in (-\frac{\pi}{2},\frac{\pi}{2})$.
\end{proof}

The notion of combinatorial order has been introduced in (\ref{e14}).

\begin{corollary}\label{c4}The sequences $\{x_n\}_{n\in I(u,e,\theta)}\subset e$ and $\{y_n\}_{n\in I(u,e,\theta)}\subset f$ have the same combinatorial order.\end{corollary}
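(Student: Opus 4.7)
The plan is to argue by contradiction. If the sequences did not have the same combinatorial order, then by the symmetry in (\ref{e14}) one could pick indices $k,l,m\in I(u,e,\theta)$ with $x_k\in [x_l,x_m]$ but $y_k\notin [y_l,y_m]$. Since $x_l,x_k,x_m$ all lie on the single side $e$, after possibly swapping $l$ and $m$ I would arrange $x_l\prec x_k\prec x_m$ in the counterclockwise orientation of $e$.

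The three trajectories $u_l,u_k,u_m$ leave $e$ with the common direction $\theta$ and are therefore parallel in $P$. I would use this parallelism to conclude that on every edge of $P$ they subsequently all visit in common, their positional order is again $l,k,m$, so $u_k$ is strictly between $u_l$ and $u_m$. Applying Proposition~\ref{p5} to each of the three pairs drawn from $\{l,k,m\}\subset I(u,e,\theta)$ gives that $v_l,v_k,v_m$ do not cross pairwise before their symbolic separations; a standard planar Jordan-region argument then yields that whenever two such non-crossing $v$-trajectories reach a common edge of $Q$ while their codings still agree, their positional order there matches their order on $f$.

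Next I would introduce the smallest index $a\ge 1$ at which the three codings $\sigma_{l+a}(u),\sigma_{k+a}(u),\sigma_{m+a}(u)$ are not all equal. At step $a-1$ the three $u$-trajectories sit on a common edge of $P$ with $u_k$ positionally in the middle, while the three $v$-trajectories sit on the correspondingly labelled edge of $Q$ with $v_k$ \emph{not} in the middle (by the hypothesis on $y_k$). The split at step $a$ is driven by one or two corners of $P$ cutting the parallel pencil, and by code equivalence the same split must be realised in $Q$. Running through the possible splits (two versus one, or three distinct) together with the admissible placements of $y_k$ relative to $y_l,y_m$ on $f$, I expect in each case the forced $Q$-geometry either to produce a pair $(v_{n_1},v_{n_2})$ whose trajectories must cross before their symbolic separation -- directly contradicting Proposition~\ref{p5} -- or to require two distinct vertices of $Q$ to lie at the intersection of the same pair of sides $f_A,f_B$, which is impossible in a polygon.

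The hard part will be this case analysis: three positions of $y_k$ on $f$ crossed with the possible split patterns at step $a$. The three-distinct split in particular will need a brief extra combinatorial observation, namely that $P$ and $Q$ share the same cyclic numbering of sides (and hence the same side-adjacency structure), which should rule out the remaining incompatible index triples.
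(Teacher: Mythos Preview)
Your approach is on the right track---it correctly invokes Proposition~\ref{p5} and aims for a contradiction via code equivalence---but it is substantially overcomplicated, and the case analysis you outline is both incomplete and unnecessary.  The paper's proof uses only \emph{two} indices, not three.

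The simplification you are missing is that on a single side $e$ (a linearly ordered arc), the combinatorial-order condition~(\ref{e14}) is equivalent to the map $x_n\mapsto y_n$ being order-preserving.  Hence if the combinatorial orders differ there already exist $m,n\in I(u,e,\theta)$ with $x_m<x_n$ and $y_n<y_m$.  Indeed, your three-index setup always contains such a pair: if $x_l\prec x_k\prec x_m$ while $y_k\notin[y_l,y_m]$, then either $y_m<y_l$, or $y_l<y_m$ with $y_k$ outside the segment $[y_l,y_m]$; in every sub-case one of the pairs $(l,m)$, $(l,k)$, $(k,m)$ already has its order reversed on $f$.

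With two indices the contradiction is immediate and avoids all of your split-pattern casework.  Since $u_m,u_n$ are parallel and $v_m,v_n$ do not cross before their symbolic separation (Proposition~\ref{p5}), both pairs preserve their relative position on every common side up to the first separating step $k$.  Because the starting orders on $e$ and $f$ are opposite, the relative positions at step $k-1$ are opposite in $P$ and in $Q$; unfolding along the common code then forces
\[
\sgn\bigl(\sigma_k(u_m)-\sigma_k(u_n)\bigr)\neq\sgn\bigl(\sigma_k(v_m)-\sigma_k(v_n)\bigr),
\]
which is impossible since code equivalence gives $\sigma_k(u_m)=\sigma_k(v_m)$ and $\sigma_k(u_n)=\sigma_k(v_n)$.

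Two further remarks on your sketch.  First, the ``hard part'' you anticipate---the split-pattern case analysis at step $a$---is never carried out, so as written the proposal is not a proof.  Second, the extra ingredient you reach for in the three-distinct-split case, namely that $P$ and $Q$ share the same side-adjacency structure in a geometric sense, is not available at this point in the paper; that kind of correspondence is a \emph{consequence} of the later results (Theorem~\ref{p-sameangles}), not an input here.
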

\begin{proof}The conclusion is true when $\# I(u,e,\theta)\le 1$. Assume to the contrary that for some $m,n\in I(u,e,\theta)$,\begin{equation*}\label{e21}x_m<x_n~\text{ and }~y_n<y_m.\end{equation*}Since by Proposition \ref{p5} the trajectories of $v_m,v_n$ cannot intersect before their symbolic separation, $$\sgn~(\sigma_k(u_m)-\sigma_k(u_n))\neq \sgn~(\sigma_k(v_m)-\sigma_k(v_n))$$ for some $k\in\bbn$, what is not possible for the leaders $u,v$. The case $x_n<x_m$ and $y_m<y_n$ can be disproved analogously.\end{proof}

Since \begin{equation*}
\bigcup_{e,\theta}\mathcal J(e,\theta)=\partial Q,
\end{equation*}
where the number of summands on the left is by Assumption \ref{a1} finite, Baire's theorem \cite[Theorem 5.6]{Ru} implies that there exists a side $e$ and an angle $\theta$ for which $\mathcal J(e,\theta)$ has a nonempty interior. Denote $[c,d]$ a nontrivial connected component of $\mathcal J(e,\theta)$. Put \begin{equation*}\tau=\{(y,G(y))\colon~y\in [c,d]\}.\end{equation*}
\begin{corollary}\label{c5}There is a countable subset $\tau_0$ of $\tau$ such that each point
from $\tau\setminus \tau_0$ has a bi-infinite
trajectory (either the forward or backward trajectory starting from
any point of $\tau_0$ finishes in a corner of $Q$).\end{corollary}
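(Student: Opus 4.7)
The plan is to express $\tau_0$ as a countable union of finite sets. For each $n\in\bbz$ and each corner $q\in C_Q$, set
\[
\tau_0^{n,q}:=\{(y,G(y))\in\tau:~S^n(y,G(y))\text{ is defined and equals }q\},
\]
so that $\tau_0=\bigcup_{n,q}\tau_0^{n,q}$; since $C_Q$ is finite and $\bbz$ countable, it suffices to show each $\tau_0^{n,q}$ is finite. I would further split $\tau_0^{n,q}$ by the symbolic prefix $\sigma$ of length $\vert n\vert$ (finitely many), reducing the task for $n\ge 1$ to showing that the subset $\tau_0^{n,q,\sigma}$ of points whose forward code begins with $\sigma$ has at most one element. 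The backward case $n\le -1$ is symmetric.

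Suppose for contradiction that $y_1<y_2$ give two distinct points of $\tau_0^{n,q,\sigma}$. After unfolding according to $\sigma$, the trajectories of $(y_i,G(y_i))$ become straight segments $L_1,L_2$ from $(y_i,0)$ on the base of the unfolded region $U_\sigma$ to the common image $q^{(n)}$ of the corner $q$; together with the base segment $[(y_1,0),(y_2,0)]$ they bound a topological triangle $\Delta\subset U_\sigma$. By density of $\{y_m:~m\in I(u,e,\theta)\}$ in $[c,d]$ (built into the definition of $\mathcal J(e,\theta)$) and the continuity of $G$ (Corollary \ref{c2}), I choose $y_m\in(y_1,y_2)\cap I(u,e,\theta)$ close enough to $y_1$ that $\sigma^+(v_m)$ starts with $\sigma$, where $v_m=(y_m,G(y_m))$. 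Since $v_m$ lies on the forward orbit of $v\in FV_Q$, its trajectory hits no corner, so its unfolded trajectory $L_m$ is a segment from $(y_m,0)$ ending at some $\Psi_n(y_m)\neq q^{(n)}$.

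The crux is to show that $L_m$ cannot cross $L_1$ or $L_2$ in the interior of $U_\sigma$. Such a crossing would, after folding, give an intersection of the trajectories of $v_m$ and $(y_i,G(y_i))$ at an intermediate time step, i.e., before their symbolic separation in the sense of Definition \ref{d3}. Approximating $y_i$ by a sequence $y_{m'}\in I(u,e,\theta)$ with $y_{m'}\to y_i$ (permissible because $v$ is uniformly recurrent by Corollary \ref{c1} and Proposition \ref{p1}, and $G$ is continuous), a transverse crossing of the limit trajectories would persist for $m'$ large, producing a crossing of $v_m$ and $v_{m'}$ before their symbolic separation and contradicting Proposition \ref{p5}. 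Thus $L_m$ is trapped inside $\Delta$; the only part of $\partial U_\sigma\cap\overline{\Delta}$ lying above the base is the apex $q^{(n)}$, so $\Psi_n(y_m)=q^{(n)}$, contradicting the fact that $v_m$ does not hit corners.

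The main obstacle is the topological step trapping $L_m$ in $\Delta$: one needs the interior of $\Delta$ to lie inside $U_\sigma$, which ultimately rests on $U_\sigma$ being simply connected as a surface obtained by gluing reflected copies of $Q$. A secondary point is the limiting argument extending Proposition \ref{p5} from pairs in $I(u,e,\theta)$ to arbitrary pairs on $\tau$; continuity of $G$ and of the billiard map away from the corners visited by the prefix $\sigma$ make this work. The backward case mirrors the forward one, with backward iterates of the relevant approximating points supplied by the uniform recurrence of $v$.
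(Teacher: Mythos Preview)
Your proof is correct and follows essentially the same strategy as the paper: reduce to showing that at most one point of $\tau$ with a given length-$|n|$ prefix $\sigma$ can reach a given corner at step $|n|$, then derive a contradiction from Proposition~\ref{p5} via the triangle-trapping picture in the unfolding. The paper's implementation differs only in how the contradiction is produced: rather than keeping the two corner-hitting points $(y_1,G(y_1))$, $(y_2,G(y_2))$ as limits and approximating one of them by orbit points when invoking Proposition~\ref{p5}, the paper immediately replaces both by nearby orbit points $v_m,v_n\in\tau$, inserts a third orbit point $v_\ell$ strictly between them, and observes that $v_\ell$ must cross one of $v_m,v_n$ before symbolic separation --- so Proposition~\ref{p5} applies directly, without your limiting step. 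Your version makes the trapping geometry more explicit (the triangle $\Delta$), at the cost of that extra approximation argument; the paper's version is terser but relies on the same geometric picture implicitly.
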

\begin{proof}Assume that there are two points $\hat v,\tilde v\in\tau$ such that $\pi_1(\hat v)<\pi_1(\tilde v)$, for some $k\in\bbn$ $\pi_1(S^k\hat v)=\pi_1(S^k\tilde v)$ is a common corner and $\sigma_i(\hat v)=\sigma_i(\tilde v)$ for $i\in\{0,\dots,k-1\}$.
As before let $v_n = S^nv$. Choose three of these points $v_{\ell},v_m,v_n\in\tau$ satisfying
\begin{itemize}\item $\pi_1(v_m)<\pi_1(v_{\ell})<\pi_1(v_n)$\item $v_m$, resp. $v_n$ is (sufficiently) close to $\hat v$, resp. $\tilde v$\end{itemize}
Then the trajectories of either $v_m,v_{\ell}$ or $v_{\ell},v_n$ intersect before their symbolic separation,  contradicting
Proposition \ref{p5}.

Thus for each $k \ge 1$ and each $\hat v, \tilde v \in \tau$ with common symbolic itenerary of length $k$, we can not have $\pi_1(S^k \hat v) = \pi_1(S^k \tilde v)$ is a corner,
or equivalently each corner can have at most one preimage of order $k$ for each forward symbolic
itinerary segment of length $k$. This implies that the set $\tau_{0,F}=\tau\setminus FV_Q$ is at most countable. This is also true for $\tau_{0,B}=\tau\setminus BV_Q$ and we can put $\tau_0=\tau_{0,F}\cup \tau_{0,B}$.\end{proof}

\begin{corollary}\label{c6}The continuous function $G\colon~\mathcal J(e,\theta)\to [-\frac{\pi}{2},\frac{\pi}{2}]$ defined in Corollary \ref{c2} has to be constant on each connected component $[c,d]$ of $\mathcal J(e,\theta)$.\end{corollary}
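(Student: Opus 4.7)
The strategy is to argue by contradiction using Lemma~\ref{l-infinitesimal} and Proposition~\ref{p5}. Suppose $G$ is non-constant on some component $[c,d]$ of $\mathcal J(e,\theta)$; the plan is to produce two iterates of $v$ whose trajectories intersect before their symbolic separation, contradicting Proposition~\ref{p5}.

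First, I would locate an interior point $y_0 \in (c,d)$ at which one of the four Dini derivatives required by Lemma~\ref{l-infinitesimal} has the correct sign. Since $G$ is continuous and non-constant on $[c,d]$ it is non-constant on $(c,d)$ as well, so there exist $a<b$ in $(c,d)$ with $G(a) \neq G(b)$; assuming $G(a)<G(b)$, if $D^+G$ were everywhere non-positive on $[a,b)$ then $G$ would be non-increasing there, a contradiction. Hence $D^+G(y_0)>0$ for some $y_0 \in [a,b)$, and the three symmetric cases are handled analogously. Since $\pi_1(\tau_0)$ is countable by Corollary~\ref{c5}, I can further arrange $y_0 \notin \pi_1(\tau_0)$, so that $(y_0,G(y_0))$ lies in $\tau \setminus \tau_0$ and has a bi-infinite trajectory in $Q$.

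Next, Lemma~\ref{l-infinitesimal} applied with $C=\pi_1(\tau_0)$ yields a sequence $y_n \to y_0$ in $(c,d) \setminus \pi_1(\tau_0)$ for which the crossings $p_{G(y_0)} \cap p_{G(y_n)}$ form a bounded subset of $\mathbb{R}^2$. In particular $G(y_n) \neq G(y_0)$, so the unfolded billiard trajectories of $(y_0,G(y_0))$ and $(y_n,G(y_n))$ are non-parallel lines meeting at uniformly bounded distance from their starting points, and both points have bi-infinite trajectories by construction. Because the set $\{v_m : m \in I(u,e,\theta),\ y_m \in [c,d]\}$ is dense in $\tau$ (immediate from $[c,d]\subset\mathcal J(e,\theta)=\overline{\{y_m:m\in I(u,e,\theta)\}}$ together with continuity of $G$), I would then pick $a,b_n \in I(u,e,\theta)$ with $v_a$ and $v_{b_n}$ arbitrarily close to $(y_0,G(y_0))$ and $(y_n,G(y_n))$ respectively. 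Continuity of the billiard dynamics away from the countable set of corner-hitting trajectories then guarantees that for sufficiently close approximations, the trajectories of $S^a v$ and $S^{b_n} v$ also cross at a bounded iterate while sharing their symbolic codes up through that crossing; whether the crossing occurs at a positive or negative iterate depends on which of the four Dini derivatives was activated, corresponding to case (p) or (n) of Definition~\ref{d3}.

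This contradicts Proposition~\ref{p5} applied to $a,b_n \in I(u,e,\theta)$, forcing $G$ to be constant on $[c,d]$. The technical heart is the final continuity step: Lemma~\ref{l-infinitesimal} supplies a uniform time bound $N$ for the crossing, and one must fix $\delta$ so small that a $\delta$-perturbation of initial data (i) keeps the relevant segment of each orbit uniformly away from corners in time $[-N,N]$, and (ii) produces trajectories that share symbolic codes over a window strictly longer than $N$. Avoiding the countable set of corner-hitting initial conditions secures (i), and standard local continuity of the billiard map away from corners — in the spirit of Proposition~\ref{p0} — secures (ii), so both requirements are simultaneously achievable.
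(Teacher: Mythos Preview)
Your proposal is correct and follows essentially the same approach as the paper: both arguments combine Corollary~\ref{c5} (countability of $\pi_1(\tau_0)$), Lemma~\ref{l-infinitesimal}, and Proposition~\ref{p5} via an approximation of the limit points $(y_0,G(y_0))$, $(y_n,G(y_n))$ by actual orbit points $v_k,v_\ell$ with $k,\ell\in I(u,e,\theta)$. The only difference is organizational: the paper fixes an arbitrary $\tilde y_0\in(c,d)\setminus C$, proves the Claim that no bounded-crossing sequence exists at $\tilde y_0$, and deduces via Lemma~\ref{l-infinitesimal} that $G'(\tilde y_0)=0$ for every such $\tilde y_0$, hence $G$ is constant; you instead assume $G$ is non-constant, locate a single $y_0$ where a Dini derivative has the required sign, and run the same crossing argument to reach a contradiction. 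One small point to tighten: your sentence ``I can further arrange $y_0\notin\pi_1(\tau_0)$'' needs the observation that the Dini-derivative monotonicity theorem tolerates a countable exceptional set, so that $D^+G\le 0$ on $[a,b)\setminus\pi_1(\tau_0)$ would already force $G$ non-increasing---this is what guarantees a good $y_0$ outside the countable set, and the paper's formulation sidesteps this by treating every $\tilde y_0\in(c,d)\setminus C$ at once.
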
\begin{proof} Since by Corollary \ref{c5} the projection $C=\pi_1(\tau_0)$ is countable and $G$ is continuous, it is
sufficient to show that $G'(\tilde y_0)=0$ whenever $\tilde y_0\in (c,d)\setminus
C$.

 To simplify the notation, choose the origin of $S^1$ to be
the direction perpendicular to the side of $Q$ containing $(c,d)$
and  fix $\tilde y_0\in (c,d)\setminus C$; then by Corollary \ref{c2} for a sufficiently small
neighborhood $U(\tilde y_0)$ of $\tilde y_0$, $G(U(\tilde y_0))\subset
(-\frac{\pi}{2},\frac{\pi}{2})$.

For $\tilde y \in U(\tilde y_0) \setminus C$ consider the unfolded (bi-infinite)
billiard trajectory of $(\tilde y,G(\tilde y))$ under the billiard flow
$\{S_t\}_{t\in\bbr}$ in $Q$. Via unfolding, this trajectory corresponds to the line
$p_{G(\tilde y)}$ with the equation $y=\tilde y+z\tan G(\tilde y)$.

\begin{claim}\label{cl}There is no sequence $\{\tilde y_n\}_{n\ge 1}\subset
(c,d)\setminus C$ such that $\lim_n\tilde y_n=\tilde y_0$ and the set of crossing
points $\{p_{G(\tilde y_0)}\cap p_{G(\tilde y_n)}\colon~n\ge 1\}$ is
bounded.\end{claim}
\begin{proof}Assuming the contrary of the conclusion we can consider sufficiently large $n$ and some point $v_k$, resp. $v_{\ell}$ approximating $(\tilde y_0,G(\tilde y_0))$, resp. $(\tilde y_n,G(\tilde y_n))$ such that the trajectories of $v_k,v_{\ell}$ intersect before their symbolic separation, what is impossible by Proposition \ref{p5}.
\end{proof}
Now, applying Lemma \ref{l-infinitesimal} and Claim \ref{cl} we obtain
that the function $G$ satisfies
$G'(\tilde y_0)=0$ for every $\tilde y_0\in (c,d)\setminus C$, i.e., for some $\vartheta\in (-\frac{\pi}{2},\frac{\pi}{2})$, $G\equiv\vartheta$ is constant on $[c,d]$.

\end{proof}

\section{Rational versus Irrational}
\begin{lemma}\label{l-finitefloors}Let $P$,$Q$ be code equivalent with leaders $u,v$;  $P$ rational. Then the set of directions $$\{\pi_2(S^nv)\colon~n\ge 0\}$$
along the trajectory of $v$ is finite.
\end{lemma}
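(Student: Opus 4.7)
My plan begins by invoking Proposition~\ref{p-summary} for the leader $u$. Because $P$ is rational and $\overline{\{\pi_1(T^nu)\}}_{n\ge 0}=\partial P$ by condition~(C1), cases~(i) (periodic, with finite boundary projection) and~(ii) (a flat strip has proper projection) are excluded. Hence case~(iii) holds, so $\{\pi_2(T^nu):n\ge 0\}$ has exactly $2N_P$ elements and Assumption~\ref{a1} is satisfied. With this in hand I apply Corollaries~\ref{c2}--\ref{c6} wholesale: they give the decomposition $\partial Q=\bigcup_{(e,\theta)}\mathcal J(e,\theta)$ as a finite union (indexed by at most $k\cdot 2N_P$ pairs with $\mathcal J(e,\theta)\ne\emptyset$), the continuity of $G\colon \mathcal J(e,\theta)\to (-\frac{\pi}{2},\frac{\pi}{2})$, and the constancy of $G$ on every connected component of each $\mathcal J(e,\theta)$.

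Because $G(y_n)=\beta_n$ for $n\in I(u,e,\theta)$ and $\{y_n:n\in I(u,e,\theta)\}$ is dense in $\mathcal J(e,\theta)$, one has $\{\pi_2(S^nv):n\ge 0\}=\bigcup_{(e,\theta)}G(\mathcal J(e,\theta))$, so it suffices to show each $G(\mathcal J(e,\theta))$ is a finite set. To this end I will rule out the possibility that some $\mathcal J(e,\theta)$ has infinitely many connected components carrying distinct constant values of $G$ (a Cantor-like structure with continuum-many values is the worst case). The plan is to leverage Proposition~\ref{p5} across components: if $G$ took two distinct values $\vartheta\ne \vartheta'$ on two connected components of $\mathcal J(e,\theta)$, then by density one finds $m,n\in I(u,e,\theta)$ with $y_m,y_n$ as close as possible to the common boundary of the gap separating these components and $\beta_m=\vartheta$, $\beta_n=\vartheta'$. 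The initial segments of the trajectories of $v_m,v_n$ begin at close points on the side $f\sim e$ and head into $Q$ with a fixed nonzero angular difference, and a geometric argument exploiting the boundedness of $Q$ should force these segments to cross inside $Q$ before the codes $\sigma_{\cdot}(v_m),\sigma_{\cdot}(v_n)$ can separate, contradicting Proposition~\ref{p5}. Hence $G$ is constant on every $\mathcal J(e,\theta)$, the direction set contains at most $k\cdot 2N_P$ elements, and the lemma follows.

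The main obstacle is precisely this last geometric step, because the gap between distinct components of $\mathcal J(e,\theta)$ has a fixed positive length, so one cannot make the starting positions $y_m,y_n$ arbitrarily close. Overcoming this requires careful use of the compactness of $Q$, a uniform lower bound on $|\vartheta-\vartheta'|$ available over the finitely many pairs of components, and the observation that symbolic separation cannot occur before the first reflection, so that a crossing of the first segments of $v_m$ and $v_n$ inside $Q$ is enough to contradict Proposition~\ref{p5}. An alternative route, which I would fall back on if the preceding geometric argument proves too delicate, is to iterate the pencil $\{(y,\vartheta):y\in[c,d]\}$ from Corollary~\ref{c5} under the billiard flow $\{S_t\}$, use uniform recurrence of $v$ (via Corollary~\ref{c1} and Proposition~\ref{p1}) to spread the interior structure to every $\mathcal J(e',\theta')$ visited by the orbit, and then count components via the finite combinatorial structure of $u$'s orbit.
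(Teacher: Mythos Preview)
Your primary plan---proving $G$ constant on the whole of each $\mathcal J(e,\theta)$---asks for more than is needed and runs into exactly the obstacle you flag. The proposed geometric fix does not go through: the gap between two components of $\mathcal J(e,\theta)$ has fixed positive width, so $y_m,y_n$ cannot be brought close; there is no reason the \emph{first} segments of $v_m,v_n$ must cross inside $Q$ (for the ``wrong'' sign of $\vartheta-\vartheta'$ relative to $y_m-y_n$ they diverge from the start); and your phrase ``finitely many pairs of components'' already presupposes a finiteness you have not established. Note also that Lemma~\ref{l-finitefloors} does not assume $u$ non-exceptional (see the Remark following it), so density of $\{x_n\}_{n\in I(u,e,\theta)}$ in $e$ is unavailable and you cannot force $x_m,x_n$ close in order to drive the separation time $\ell$ to infinity either.

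Your fallback is the paper's actual route, and it sidesteps the component structure of $\mathcal J(e,\theta)$ entirely. One needs only the single nontrivial interval $[c,d]\subset\mathcal J(e,\theta)$ supplied by Baire, on which $G\equiv\vartheta$ by Corollary~\ref{c6}. Choose $n\in I(u,e,\theta)$ with $(y_n-\varepsilon_0,y_n+\varepsilon_0)\subset(c,d)$. Uniform recurrence of the common itinerary (Corollary~\ref{c1} for $u$, then Proposition~\ref{p1}) yields a syndetic sequence of times along which $v$ returns $\varepsilon_0$-close to $v_n$ while $u$ returns close enough to $u_n$ that its direction is exactly $\theta$; these times therefore lie in $I(u,e,\theta)$, the corresponding $v$-footpoints lie in $(c,d)$, and the $v$-direction there is $G=\vartheta$ exactly. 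A syndetic set of times with $\pi_2(S^{m}v)=\vartheta$ and gap bound $M$ forces every direction of $v$ to arise from $\vartheta$ by at most $M$ successive reflections in the $k$ sides of $Q$, hence to lie in a finite set. No further analysis of the components of $\mathcal J(e,\theta)$ is required.
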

\begin{proof}Applying Lemma \ref{l-infinitesimal} and Corollary \ref{cl} we obtain
that the function $G$ defined in Corollary \ref{c2} satisfies
$G'(\tilde y_0)=0$ for every $\tilde y_0\in (c,d)\setminus C$, i.e., for some $\vartheta\in (-\frac{\pi}{2},\frac{\pi}{2})$, $G\equiv\vartheta$ is constant on $[c,d]$, where $[c,d]$ is a nontrivial connected component of $\mathcal J(e,\theta)$ defined in (\ref{e12}).

We know that the leader $v$ is uniformly recurrent. Take a positive integer $n\in I(u,e,\theta)$ and a positive $\varepsilon_0$ such that $$(y_n-\varepsilon_0,y_n+\varepsilon_0)\subset (c,d).$$ There is a syndetic sequence $\{n(i)\}_{i\ge 0}\subset I(u,e,\theta)$ for which
$$\varrho(S^{n(i)}v_n,v_n)<\varepsilon_0,~\pi_2(S^{n(i)}v_n)=\vartheta$$ for each $i$. This shows that the set of directions $\{\pi_2(S^nv)\colon~n\ge 0\}$ along the trajectory of $v$ is
finite.\end{proof}

\begin{remark}In Lemma \ref{l-finitefloors} we do not assume that $u$ is non-exceptional. \end{remark}

\begin{theorem}\label{t6}Let $P$, $Q$ be code equivalent with leaders $u,v$; $P$ rational, $u$ non-exceptional.
Then $Q$ is rational with $v$ non-exceptional.
\end{theorem}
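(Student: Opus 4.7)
My plan proceeds in two stages: first rationality of $Q$, then non-exceptionality of $v$.

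From Lemma~\ref{l-finitefloors}, the direction set $\{\pi_2(S^nv)\colon n\ge 0\}$ is finite. By (C1) every side of $Q$ is visited by $v$, so this finite set is stable under every side-reflection, hence under the group $D_Q$ generated by the reflections parallel to the sides of $Q$. The $D_Q$-stabilizer of any direction has order at most $2$ (only the reflection through the line of that direction can fix it), so orbit--stabilizer gives $|D_Q|\le 2|D_Q\cdot\pi_2(v)|<\infty$, whence $Q$ is rational.

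With $Q$ rational, Proposition~\ref{p-summary} places $v$ into case (i), (ii) or (iii). Case (i) is excluded by (C1). Case (iii) for $v$ is equivalent to $\pi_2(v)$ being non-exceptional, since a saddle connection in direction $\pi_2(v)$ would yield a proper closed invariant subset of $R_{\pi_2(v)}$ and destroy the minimality in (iii). So I only need to rule out case (ii).

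Suppose for contradiction that $\overline{\orb(v)}=\mathcal T$ is a flat strip. Since $u$ is in case (iii), the set $\{x_n\}_{n\in I(u,e,\theta)}$ is dense in each side $e$ of $P$ for each direction $\theta$ in $u$'s finite direction set. Combining this with Corollary~\ref{c4} (order-preservation), Corollary~\ref{c6} (constancy of $G$ on each connected component of $\mathcal J(e,\theta)$), and the syndetic-return argument of Lemma~\ref{l-finitefloors} applied to $v$'s uniform recurrence (which $v$ inherits from $u$ via Proposition~\ref{p1}), I would show that for every side $f$ of $Q$ and every direction $\vartheta$ in $v$'s finite direction set, the arc $f\times\{\vartheta\}$ (minus corners) lies in $\overline{\orb(v)}$. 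Iterating across all sides and directions, this fills $\overline{\orb(v)}$ to all of $R_{\pi_2(v)}$, contradicting the non-emptiness of $\partial\mathcal T$ required by property~(3) of a flat strip. Hence case~(ii) is impossible, and $v$ is non-exceptional.

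The main obstacle is the density-transfer step: upgrading the order-density of $\{y_n\}_{n\in I(u,e,\theta)}$ in $\mathcal J(e,\theta)$ (guaranteed by Corollary~\ref{c4}) to actual density in the whole side $f$, as Corollary~\ref{c4} alone allows $\mathcal J(e,\theta)$ to be a Cantor-like set. Removing such gaps requires bootstrapping from the Baire interval component $[c,d]$ produced in Lemma~\ref{l-finitefloors}: using uniform recurrence to translate the constant-direction behaviour on $[c,d]$ across $f$, and then across all sides via the $D_Q$-action on directions, ultimately identifying $\overline{\orb(v)}$ with $R_{\pi_2(v)}$.
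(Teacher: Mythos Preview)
Your rationality argument is essentially correct, with the minor caveat that $\pi_2(S^nv)$ records the angle relative to the inward normal rather than the global direction; one should first pass to the finite set of global directions of the orbit, which is then $D_Q$-invariant as you say. The paper instead deduces rationality at the very end from Theorem~\ref{t-irrat}(i) once $v$ is known to be non-exceptional, but your route is equally valid and arguably more direct.

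The non-exceptionality argument, however, has a genuine gap that you yourself flag. You want to upgrade the order-preserving correspondence between $\{x_n\}_{n\in I(u,e,\theta)}$ (dense in $e$) and $\{y_n\}_{n\in I(u,e,\theta)}$ to density of the $y$-sequence in all of $f$. But order-preservation alone gives no such conclusion: a dense sequence in $[0,1]$ can be order-isomorphic to a sequence whose closure is $[0,\tfrac12]$. Your proposed bootstrap via uniform recurrence and the $D_Q$-action does not work either: if $\overline{\orb(v)}$ is a flat strip, uniform recurrence is entirely consistent with $v$ remaining inside it forever, and the $D_Q$-action permutes directions, not spatial positions. The Baire interval $[c,d]$ and all its billiard-map images already lie inside the strip, so iteration cannot escape it.

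The paper avoids this by reversing the direction of transfer: rather than proving density on the $Q$-side, it exploits the \emph{known} density on the $P$-side. If $v$ were exceptional, its orbit closure would be bounded by a generalized diagonal $d$ of $Q$ running from a corner $y=q_i$ to a corner $y'$; minimality forces the orbit of $v$ to approximate $d$, giving a sequence $n(k)$ with $y_{n(k)}\to q_i$ and $y_{n(k)+\ell}\to y'$ at fixed directions. By Corollary~\ref{c4} the sequences $\{x_{n(k)}\}$ and $\{y_{n(k)}\}$ have the same combinatorial order, and since the $x$-sequences are \emph{dense} in their sides of $P$ (Proposition~\ref{p-summary}(iii) for non-exceptional $u$), convergence of $y_{n(k)}$ to an endpoint of $f$ forces $x_{n(k)}$ to converge to an endpoint of $e$, i.e., to a corner of $P$. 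The same holds at the other end, producing a generalized diagonal in $P$ in the direction of $u$ and contradicting non-exceptionality of $u$. The crucial point is that density on the $P$-side is exactly what lets you read off corner-convergence from the combinatorial order alone.
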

\begin{proof}
As before, we put $x_n=\pi_1(T^nu)$ and $y_n=\pi_1(S^nv)$. Assume $v$ is exceptional. By Definition \ref{d1} $v$ is non-periodic. At the same time Lemma \ref{l-finitefloors} says that the set of directions \begin{equation*}\label{e19}\{\pi_2(S^nv)\colon~n\ge 0\}\end{equation*}
along the trajectory of $v$ is finite. Thus Proposition \ref{p-summary} implies that
 if $Q$ is rational then $v$ is minimal in a flat strip or in an invariant surface
$R_{\pi_2(v)}$.  On the other hand if $Q$ is irrational, Lemma \ref{l-finitefloors} and Theorem \ref{t-irrat}(ii) imply that $v$ is
minimal in a flat strip.

Suppose that $v$ is exceptional, then it is parallel to a generalized diagonal $d$ which is the boundary of a minimal flat strip.
The minimality implies that
$v$ is not only parallel to $d$, but $v$ also approximates $d$. Denote $y$, resp.\  $y'$ an outgoing,
resp.~incoming corner of $d$ with \begin{equation}\label{e17}y'=\pi_1(S^{\ell}(y,\beta))\end{equation} for
some $\ell\in\bbn$ and a direction $\beta$ with respect to a side $f=f_i=[q_i,q_{i+1}]$. Let us assume that $y=q_i$
and that $v$ approximates $d$ from the side $f$ (the case when  $v$ approximates $d$ from the other side, i.e.~$y=q_{i+1}$ is similar).
Since $v$ approximates $d$ and the set $\{\pi_2(S^nv)\colon~n\ge 0\}$ is finite we can consider a sequence
$\{n(k)\}_{k\ge 0}$ such that for each $k$,

$$S^{n(k)}v=(y_{n(k)},\beta),~y_{n(k)}\in f,$$

$$S^{n(k)+\ell}v=(y_{n(k)+\ell},\beta'),~y_{n(k)+\ell}\in f',$$ $\lim_{k\to\infty}y_{n(k)}=y$ and $\lim_{k\to\infty}y_{n(k)+\ell}=y'$,
where $\ell$ is given by (\ref{e17}) and $f'$ is the appropriate side of $Q$ with endpoint $y'$.

Let $e=e_i=[p_i,p_{i+1}]$, resp.~$e'$ be the sides of $P$ corresponding to $f$, resp.~$f'$ . Since $P$ is rational, we can assume that $\{n(k)\}_{k\ge 0}\subset I(u,e,\alpha)$ and $\{n(k)+\ell\}_{k\ge 0}\subset I(u,e',\alpha')$ for some $\alpha,\alpha'\in (-\frac{\pi}{2},\frac{\pi}{2})$ and Corollary \ref{c4} can be used. By that corollary the combinatorial order of the sequences  $\{x_n\}_{n\in I(u,e,\alpha)}\subset e$ and $\{y_n\}_{n\in I(u,e,\alpha)}\subset f$, resp. $\{x_n\}_{n\in I(u,e',\alpha')}\subset e'$ and $\{y_n\}_{n\in I(u,e',\alpha')}\subset f'$ are the same. We assume the leader $u$ to be non-exceptional hence by Proposition \ref{p-summary}, the sequence  $\{x_n\}_{n\in I(u,e,\alpha)}$, resp. $\{x_n\}_{n\in I(u,e',\alpha')}$ is dense in the side $e$, resp. $e'$. Then necessarily
$\lim_{k\to\infty}x_{n(k)}=x\in C_P\cap e$ and $\lim_{k\to\infty}x_{n(k)+\ell}=x'\in C_P\cap e'$, hence
 \begin{equation*}x'=\pi_1(T^{\ell}(x,\alpha)),\end{equation*}
what contradicts our choice of non-exceptional $u$. Thus, the leader $v$ has to be non-exceptional.

In order to verify that $Q$ is rational, one can simply use Theorem \ref{t-irrat}(i) and Lemma \ref{l-finitefloors}.
\end{proof}

\section{Rational versus Rational - Preparatory Results}
Throughout this section we will assume that $P,Q$ are rational and code equivalent with non-exceptional leaders $u,v$,
Theorem \ref{t6} implies that the assumption that $v$ is non-exceptional is redundant.

\begin{lemma}\label{l2}  Let $P,Q$ rational be code equivalent with non-exceptional leaders $u,v$.  For every side $e_i$ and every direction $\theta\in\pi_2((e_i\times (-\frac{\pi}{2},\frac{\pi}{2}))\cap\omega(u))$ there exists a direction $\vartheta\in \pi_2((f_i\times (-\frac{\pi}{2},\frac{\pi}{2}))\cap\omega(v))$ such that $I(u,e_i,\theta)=I(v,f_i,\vartheta)$ and the sequences $\{\pi_1(T^nu)\}_{n\in I}$ and $\{\pi_1(S^nv)\}_{n\in I}$ have the same combinatorial order.
\end{lemma}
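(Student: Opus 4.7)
The plan is to reduce the lemma to showing that $n\mapsto\pi_2(S^nv)$ is constant on $I(u,e_i,\theta)$ and then to derive that constancy from the coding rigidity of Theorem~\ref{t2} applied both forward and backward.

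First I collect the setup. Theorem~\ref{t6} forces $Q$ rational and $v$ non-exceptional, so Proposition~\ref{p-summary}(iii) applies to both leaders; in particular the direction sets $\pi_2((e_i\times(-\frac{\pi}{2},\frac{\pi}{2}))\cap\omega(u))$ and $\pi_2((f_i\times(-\frac{\pi}{2},\frac{\pi}{2}))\cap\omega(v))$ are finite, Assumption~\ref{a1} is satisfied, and by Corollary~\ref{c1} and Proposition~\ref{p1} both $u$ and $v$ are uniformly recurrent. Moreover $\{\pi_1(T^nu):n\in I(u,e_i,\theta)\}$ is dense in $e_i$, by the minimality clause of Proposition~\ref{p-summary}(iii). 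The combinatorial order statement of the lemma is exactly Corollary~\ref{c4}, so it remains to produce a single $\vartheta$ with $I(u,e_i,\theta)=I(v,f_i,\vartheta)$.

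For the forward inclusion I argue by contradiction. Suppose $\pi_2(S^nv)$ takes at least two distinct values on $I(u,e_i,\theta)$ and set $X_\vartheta=\{x_n:n\in I(u,e_i,\theta),\ \pi_2(S^nv)=\vartheta\}\subset e_i$; at least two of the $X_\vartheta$ are non-empty and their union is dense in the connected interval $e_i$, so by connectedness there exist distinct $\vartheta_1,\vartheta_2$ and a point $x^*\in\overline{X_{\vartheta_1}}\cap\overline{X_{\vartheta_2}}$. I then choose $n_k,n_k'\in I(u,e_i,\theta)$ with $x_{n_k}\to x^*$, $\pi_2(S^{n_k}v)=\vartheta_1$ and $x_{n_k'}\to x^*$, $\pi_2(S^{n_k'}v)=\vartheta_2$, and pass to convergent subsequences so that $S^{n_k}v\to (y^-,\vartheta_1)$ and $S^{n_k'}v\to (y^+,\vartheta_2)$ in $V_Q$; necessarily $(y^-,\vartheta_1)\ne(y^+,\vartheta_2)$. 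Now I split on the forward orbit of $(x^*,\theta)$. If it does not hit a corner, then the forward coding is continuous at $(x^*,\theta)$ and at the two regular limits $(y^-,\vartheta_1)$, $(y^+,\vartheta_2)$, so by code equivalence $\sigma^+(y^-,\vartheta_1)=\sigma^+(x^*,\theta)=\sigma^+(y^+,\vartheta_2)$; since $(x^*,\theta)\in\omega(u)$ and $u$ is non-periodic, this common itinerary is non-periodic, and Theorem~\ref{t2} forces $(y^-,\vartheta_1)=(y^+,\vartheta_2)$, a contradiction. If the forward orbit hits a corner at some step $k$, then the non-exceptional hypothesis on $\theta$ precludes a matching backward corner-hit through $(x^*,\theta)$ (otherwise one would have a generalized diagonal in direction $\theta$), so the backward orbit of $(x^*,\theta)$ is infinite and the backward coding $\sigma^-(x^*,\theta)$ is continuous at $(x^*,\theta)$. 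Replaying the same argument with $\sigma^-$ in place of $\sigma^+$, together with the time-reversed analogue of Theorem~\ref{t2}, again forces $(y^-,\vartheta_1)=(y^+,\vartheta_2)$ in $BV_Q$, contradicting $\vartheta_1\ne\vartheta_2$.

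Hence $\pi_2(S^nv)$ is a constant $\vartheta$ on $I(u,e_i,\theta)$ and $I(u,e_i,\theta)\subseteq I(v,f_i,\vartheta)$. The reverse inclusion comes from running the same argument with the roles of $(P,u)$ and $(Q,v)$ swapped: it produces, for each admissible $\vartheta'$, a unique $\psi(\vartheta')$ with $I(v,f_i,\vartheta')\subseteq I(u,e_i,\psi(\vartheta'))$; applying this at $\vartheta'=\vartheta$ and observing that any $n\in I(u,e_i,\theta)$ lies in $I(v,f_i,\vartheta)$ (by the forward inclusion) and has $\pi_2(T^nu)=\theta$ forces $\psi(\vartheta)=\theta$, hence $I(v,f_i,\vartheta)\subseteq I(u,e_i,\theta)$ and the two sets coincide. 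The main obstacle is the half-diagonal subcase: at a forward corner-hit the left- and right-approach forward itineraries split at step $k$, so Theorem~\ref{t2} applied to $\sigma^+$ is powerless, and the non-exceptional hypothesis on $\theta$ is precisely what lets the backward itinerary take over.
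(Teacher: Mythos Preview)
Your route is genuinely different from the paper's. The paper obtains the inclusion $I(u,e_i,\theta)\subset I(v,f_i,\vartheta)$ by invoking Corollaries~\ref{c2} and~\ref{c6}: the assignment $y_n\mapsto\pi_2(S^nv)$ on $n\in I(u,e_i,\theta)$ extends to a continuous $G$ on $\mathcal J(e_i,\theta)$, and $G$ is constant on connected components; together with the fact that $G$ a priori takes only finitely many values (rationality of $Q$) this forces a single $\vartheta$. That machinery rests on Proposition~\ref{p5} and ultimately on the analytic Lemma~\ref{l-infinitesimal}. You instead argue directly from coding rigidity (Theorem~\ref{t2}), which is more elementary; the reverse inclusion and the appeal to Corollary~\ref{c4} for the combinatorial order match the paper.

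There is, however, a real gap in your contradiction step. In the case $(x^*,\theta)\in FV_P$ you write that ``the forward coding is continuous at $(x^*,\theta)$ and at the two regular limits $(y^-,\vartheta_1),(y^+,\vartheta_2)$'' and then invoke Theorem~\ref{t2}. But nothing guarantees that $(y^-,\vartheta_1)$ or $(y^+,\vartheta_2)$ lie in $FV_Q$: non-exceptionality of $\vartheta_j$ only rules out a \emph{two-sided} generalized diagonal, so one of these limit points could hit a corner of $Q$ in forward time, in which case $\sigma^+(y^-,\vartheta_1)$ is undefined and Theorem~\ref{t2} does not apply. The same objection recurs in the backward subcase with $BV_Q$. (Your case split is governed by singularities of $P$, not of $Q$.)

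The cleanest repair bypasses Theorem~\ref{t2} entirely. From $(x^*,\theta)\in FV_P$ (or $BV_P$ in the other case) and $x_{n_k},x_{n'_{k'}}\to x^*$ with common direction $\theta$, you get for every $m$ indices $n_k,n'_{k'}$ with $\sigma_j(T^{n_k}u)=\sigma_j(T^{n'_{k'}}u)$ for $0\le j\le m-1$ (respectively $-m< j\le 0$, once $n_k,n'_{k'}>m$). Code equivalence transfers this to $S^{n_k}v,S^{n'_{k'}}v$, and Proposition~\ref{p4} applied in $Q$ with $\delta=|\vartheta_1-\vartheta_2|$ yields a contradiction as soon as $m\ge m(\delta)$. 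This uses only that $S^{n_k}v,S^{n'_{k'}}v\in FV_Q$, which is automatic, and never needs the limit points to be regular.
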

\begin{proof}Let us fix  a side $e_i$ and a direction $\theta\in\pi_2((e_i\times (-\frac{\pi}{2},\frac{\pi}{2}))\cap\omega(u))$. Using Corollaries \ref{c2}, \ref{c6} we obtain for some $\vartheta\in\pi_2((f_i\times (-\frac{\pi}{2},\frac{\pi}{2}))\cap\omega(v))$
$$I(u,e_i,\theta)\subset I(v,f_i,\vartheta);$$
starting from $f_i$, $\vartheta$ we get $I(u,f_i,\vartheta)\subset I(v,e_i,\theta)$ hence $I=I(u,f_i,\vartheta)=I(v,e_i,\theta)$. The fact that the sequences  $\{\pi_1(T^nu)\}_{n\in I}$ and $\{\pi_1(S^nv)\}_{n\in I}$ have the same combinatorial order is a direct consequence of Corollary \ref{c4}.
\end{proof}

Proposition \ref{p-summary} and Lemma \ref{l2} easily yield
\begin{corollary}\label{l4}  Let $P,Q$ rational be code equivalent with non-exceptional leaders $u,v$.  Then $N_P=N_Q$.
\end{corollary}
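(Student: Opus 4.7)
The plan is to count, for a fixed side, how many distinct directions occur above it in the $\omega$-limit set on each polygon, and then to transport this count across the code equivalence using Lemma~\ref{l2}.

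By Theorem~\ref{t6}, $Q$ is rational and $v$ is non-exceptional, so Proposition~\ref{p-summary}(iii) applies to both $u$ in $P$ and $v$ in $Q$. Fix an index $i$ and set
$$A_i := \pi_2\bigl((e_i \times (-\tfrac{\pi}{2},\tfrac{\pi}{2})) \cap \omega(u)\bigr), \qquad B_i := \pi_2\bigl((f_i \times (-\tfrac{\pi}{2},\tfrac{\pi}{2})) \cap \omega(v)\bigr).$$
For any $x \in e_i \setminus C_P$ the set $\{u_0 \in \omega(u) : \pi_1(u_0) = x\}$ has exactly $N_P$ elements, and its second projection depends only on the side $e_i$, not on the chosen $x$; hence $|A_i| = N_P$. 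The analogous statement for $v$ in $Q$ gives $|B_i| = N_Q$. Moreover, the orbit of $u$ visits only finitely many directions (namely $2N_P$ of them), so each $\theta \in A_i$ is actually attained along the orbit, which means $I(u,e_i,\theta) \neq \emptyset$; similarly for $B_i$.

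Next I would promote Lemma~\ref{l2} to a bijection $A_i \to B_i$. The lemma assigns to each $\theta \in A_i$ some $\vartheta \in B_i$ with $I(u,e_i,\theta) = I(v,f_i,\vartheta)$. This $\vartheta$ is unique: if two distinct directions $\vartheta_1, \vartheta_2 \in B_i$ were both assigned to the same $\theta$, then $I(v,f_i,\vartheta_1) \cap I(v,f_i,\vartheta_2)$ would be empty (a single iterate $S^n v$ has a single direction), contradicting the non-emptiness of their common value $I(u,e_i,\theta)$. The same disjointness argument on the $u$-side shows $\theta \mapsto \vartheta$ is injective. Applying Lemma~\ref{l2} with the roles of $u$ and $v$ swapped supplies the inverse assignment, so the map is surjective too.

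Therefore $N_P = |A_i| = |B_i| = N_Q$, as claimed. The only point requiring care is the reduction from the inclusion-style conclusion of Lemma~\ref{l2} to a genuine bijection; that reduction rests on the non-emptiness of $I(u,e_i,\theta)$, which in turn comes from the finiteness of the direction set provided by Proposition~\ref{p-summary}(iii). Everything else is a direct counting argument.
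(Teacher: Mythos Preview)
Your argument is correct and follows exactly the route the paper indicates: the paper merely writes ``Proposition~\ref{p-summary} and Lemma~\ref{l2} easily yield'' the corollary, and your proof spells out precisely this---counting $N_P$ directions above a side via Proposition~\ref{p-summary}(iii), then using Lemma~\ref{l2} (and its symmetric version) to biject them with the $N_Q$ directions on the $Q$-side. The invocation of Theorem~\ref{t6} is harmless but redundant, since the hypotheses already assume $Q$ rational and $v$ non-exceptional.
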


\begin{lemma}\label{l1} Suppose $P,Q$ are rational and code equivalent with non-exceptional leaders $u$, $v$; let
$\sigma^+=\{\sigma_k\}_{k\ge 0}$ denote their
 common itinerary.  If $\sigma_m=\sigma_n$ then \begin{equation*}\label{e3}\pi_2(T^mu)<\pi_2(T^nu)\iff \pi_2(S^mv)<\pi_2(S^nv).\end{equation*} \end{lemma}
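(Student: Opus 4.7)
Write $\theta_j := \pi_2(T^j u)$, $\beta_j := \pi_2(S^j v)$, $x_j := \pi_1(T^j u)$, $y_j := \pi_1(S^j v)$, and suppose $\sigma_m = \sigma_n = i$. If $\theta_m = \theta_n$, then $\{m,n\} \subseteq I(u,e_i,\theta_m)$, and Lemma \ref{l2} forces $\beta_m = \beta_n$; both sides of the biconditional then fail as strict inequalities, so the claim is vacuous. The same holds, exchanging roles, if $\beta_m = \beta_n$. So assume $\theta_m \ne \theta_n$ and $\beta_m \ne \beta_n$, which by Lemma \ref{l2} are equivalent conditions. Without loss of generality $\theta_m < \theta_n$, and for contradiction suppose $\beta_m > \beta_n$.

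\paragraph{Preparation and choice of a pair.} For each $\theta \in A_i := \pi_2(\omega(u) \cap (e_i \times \Theta))$, Proposition \ref{p-summary}(iii) gives density of $\{x_k : k \in I(u,e_i,\theta)\}$ in $e_i$, and Corollary \ref{c4} (via Lemma \ref{l2}) says the map $x_k \mapsto y_k$ on this index set is order-preserving, hence extends to a continuous orientation-preserving homeomorphism $\Psi_\theta : [p_i, p_{i+1}] \to [q_i, q_{i+1}]$ mapping endpoints to endpoints. Using density I pick $k \in I(u,e_i,\theta_m)$ with $x_k$ very close to $p_{i+1}$ and $\ell \in I(u,e_i,\theta_n)$ with $x_\ell$ very close to $p_i$; then $x_k > x_\ell$ on $e_i$, and by the extensions $y_k$ is close to $q_{i+1}$ while $y_\ell$ is close to $q_i$, so $y_k > y_\ell$ on $f_i$.

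\paragraph{Crossing and contradiction.} An elementary calculation shows that two inward rays starting from $(x',\theta')$ and $(x'',\theta'')$ on a common oriented side cross inside the polygon precisely when $(x' - x'')(\theta' - \theta'') < 0$. For $(u_k, u_\ell)$ in $P$, $(x_k - x_\ell)(\theta_m - \theta_n) < 0$, so the first-flight segments cross in $P$. For $(v_k, v_\ell)$ in $Q$, $(y_k - y_\ell)(\beta_m - \beta_n) > 0$, so the first-flight segments do not cross in $Q$. Since $u_k \neq u_\ell$, Theorem \ref{t2} guarantees an eventual symbolic separation; by choosing $x_k, x_\ell$ close enough to the respective corners, the finite number of exceptional positions on $e_i$ for directions $\theta_m, \theta_n$ allows one to ensure that separation occurs already at step $1$, i.e., $\sigma_{k+1} \neq \sigma_{\ell+1}$, so Definition \ref{d3}(p) is triggered at $k_0 = 0$. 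Running the sign argument of Proposition \ref{p5} --- but with ``no crossing'' and ``crossing'' swapped between $P$ and $Q$ --- yields $\sgn(\sigma_{k+1}(u) - \sigma_{\ell+1}(u)) \neq \sgn(\sigma_{k+1}(v) - \sigma_{\ell+1}(v))$, which contradicts $\sigma^+(u) = \sigma^+(v)$.

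\paragraph{Main obstacle.} The hardest point is transferring the sign-mismatch mechanism of Proposition \ref{p5}, whose hypothesis is parallelism of the $P$-iterates, to this non-parallel setting. Specifically, I need to ensure that separation genuinely occurs at step $1$, so intermediate crossings have no opportunity to restore parity, and that the geometry of $P$ and $Q$ assigns the labels $\sigma_{k+1}, \sigma_{\ell+1}$ to the crossed/uncrossed configurations consistently. Both should be handled by leveraging the already-established order-preserving bijections between dense sets of positions from Corollary \ref{c4}, together with the finiteness of the exceptional set on $e_i$ for the directions $\theta_m, \theta_n \in A_i$.
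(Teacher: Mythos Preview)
Your overall strategy---assume the angle orders are reversed and derive a sign mismatch in the itineraries---matches the paper's, but your implementation has genuine gaps and diverges from the paper's mechanism in a way that matters.

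\textbf{Where your argument breaks.} First, the ``elementary calculation'' $(x'-x'')(\theta'-\theta'')<0$ characterizes when the forward \emph{half-lines} meet on the inner side of $e_i$, not when the \emph{first-flight segments} do. If the angle gap $|\theta_m-\theta_n|$ is small compared to the side length and the polygon is thin in that direction, the crossing point of the half-lines may lie beyond both first hits, so no crossing with $k_0=0$ occurs. Second, your claim that separation can be forced at step $1$ by pushing $x_k,x_\ell$ towards opposite corners is unsupported: nothing about ``finitely many exceptional positions'' prevents the two rays (even if they cross in $P$) from landing on the same next side. Third, on the $Q$-side the condition $(y_k-y_\ell)(\beta_m-\beta_n)>0$ only says the half-lines from $f_i$ diverge; after one or more reflections the folded trajectories may well cross before symbolic separation, so ``no crossing in $Q$'' is not established either. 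Without both ``crossing in $P$'' and ``non-crossing in $Q$'' holding at the actual separation step, the sign contradiction does not follow.

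\textbf{What the paper does instead.} The paper avoids all of this by placing both iterates near the \emph{same} corner and going \emph{backward} a fixed number $m$ of steps supplied by Proposition~\ref{p4}. Concretely, it chooses $m(1)\in I(u,e_i,\theta^1)$ and $m(2)\in I(u,e_i,\theta^2)$ with $x_{m(1)},x_{m(2)}\in[p_i,p_i+\varepsilon]$ and simultaneously (via Lemma~\ref{l2} and density) $y_{m(1)},y_{m(2)}\in[q_i,q_i+\varepsilon]$, with $m(1),m(2)>m$. Proposition~\ref{p4} guarantees $\sigma_{m(1)-m}\ne\sigma_{m(2)-m}$. Because the two starting footpoints are $\varepsilon$-close in both polygons, the backward unfolded lines are ordered purely by the sign of the angle difference; since $\theta^1<\theta^2$ but $\vartheta^1>\vartheta^2$, the orders are opposite, giving
\[
\sgn\bigl(\sigma_{m(1)-m}(u)-\sigma_{m(2)-m}(u)\bigr)\neq\sgn\bigl(\sigma_{m(1)-m}(v)-\sigma_{m(2)-m}(v)\bigr),
\]
contradicting $\sigma^+(u)=\sigma^+(v)$. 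The key differences from your attempt are (i) both footpoints near the same endpoint (so position plays no role and only the angle sign matters), and (ii) the separation step is prescribed by Proposition~\ref{p4} rather than hoped for at step $1$.
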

\begin{proof} As before we denote $u_n=T^nu$, $x_n=\pi_1(u_n)$,  $v_n=S^nv$, $y_n=\pi_1(v_n)$.

Let $\sigma_m=\sigma_n=i\in\{1,\dots,k\}$ for some $m,n\in\bbn\cup\{0\}$; it follows from Lemma \ref{l2} that $\theta^1=\pi_2(u_m)\neq \pi_2(u_n)=\theta^2$ if and only if $\vartheta^1=\pi_2(v_m)\neq \pi_2(v_n)=\vartheta^2$. If our conclusion does not hold we necessarily have
\begin{equation}\label{e9}-\frac{\pi}{2} <\theta^1<\theta^2< \frac{\pi}{2} \text{ and } -\frac{\pi}{2} < \vartheta^2<\vartheta^1<\frac{\pi}{2}.\end{equation}

By Proposition \ref{p-summary}, each of the two sequences \begin{equation*}X_j=\{x_n\colon~n\in I(u,e_i,\theta^j)\},~j\in\{1,2\}\end{equation*} is dense in $e_i$ and an analogous statement is true for $$Y_j=\{y_n\colon~n\in I(u,f_i,\vartheta^j)\},~j\in\{1,2\}.$$ Moreover, from Lemma \ref{l2} we know that the sequences $X_j$ and $Y_j$, $j\in\{1,2\}$ have the same combinatorial order.

Let $m=\max\{m(\vert\theta_1-\theta_2\vert),m(\vert\vartheta_1-\vartheta_2\vert)\}$ due to Proposition \ref{p4}. To a given $\varepsilon>0$ one can consider integers $m(1),m(2)\in (m,\infty)$, $m(1)<m(2)$, for which \begin{equation*}\label{e5}x_{m(1)},x_{m(2)}\in [p_i,p_i+\varepsilon],~ \pi_2(u_{m(1)})=\theta^1,~\pi_2(u_{m(2)})=\theta^2\end{equation*} and also \begin{equation*}\label{e11}y_{m(1)},y_{m(2)}\in [q_i,q_i+\varepsilon],~\pi_2(v_{m(1)})=\vartheta^1,~ \pi_2(v_{m(2)})=\vartheta^2.\end{equation*}

Then $\sigma_{m(1)-m}(u)$, $\sigma_{m(2)-m}(u)$, resp. $\sigma_{m(1)-m}(v)$, $\sigma_{m(2)-m}(v)$ exist and by Proposition \ref{p4} they are different. From (\ref{e9}) we get
\begin{equation*}\sgn~(\sigma_{m(1)-m}(u)-\sigma_{m(2)-m}(u))\neq \sgn~ (\sigma_{m(1)-m}(v)-\sigma_{m(2)-m}(v)),\end{equation*}
what is impossible for the leaders $u,v$, a contradiction.
\end{proof}

For a polygon $P$ and its corner $p_j\in C_P$, an element $w\in V_P$ points at $p_j$ if $\pi_1(Tw)=p_j$. For $u\in FV_P$ we denote $N(u,p_j)$ the number of elements from $\omega(u)$ that point at $p_j$.

\begin{lemma}\label{l5}  Let $P,Q$ rational be code equivalent with non-exceptional leaders $u,v$.  Then $N(u,p_j)=N(v,q_j)$, $1\le j\le k$, where $k$ is a common number of sides of $P,Q$.\end{lemma}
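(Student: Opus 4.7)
The plan is to set up a natural bijection between the elements of $\omega(u)$ that point at $p_j$ and those of $\omega(v)$ that point at $q_j$, using repeated applications of Lemma \ref{l2}.

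First, by Proposition \ref{p-summary}(iii) (applicable since $u,v$ are non-exceptional and $P,Q$ are rational), $\omega(u)=R_{\pi_2(u)}$ and for each side $e_i$ the set of directions
\begin{equation*}
\Theta_i^P=\pi_2\bigl((e_i\times(-\tfrac{\pi}{2},\tfrac{\pi}{2}))\cap\omega(u)\bigr)
\end{equation*}
has exactly $N=N_P=N_Q$ elements (using Corollary \ref{l4}). For each pair $(e_i,\theta)$ with $\theta\in\Theta_i^P$, Lemma \ref{l2} produces a corresponding $\vartheta=:\Phi_i(\theta)\in\Theta_i^Q$ with $I(u,e_i,\theta)=I(v,f_i,\vartheta)$ and with identical combinatorial order of the orbit sequences $\{x_n\}_{n\in I}\subset e_i$ and $\{y_n\}_{n\in I}\subset f_i$; applied symmetrically, $\Phi_i\colon\Theta_i^P\to\Theta_i^Q$ is a bijection. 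Moreover, for each $(e_i,\theta)$ at most one $x^*\in e_i\setminus C_P$ can have its direction-$\theta$ trajectory terminating at the specific corner $p_j$, so $N(u,p_j)$ is the count of pairs $(e_i,\theta)$ admitting such a witness.

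Next, I would establish the key implication: if $(x^*,\theta)\in\omega(u)$ satisfies $\pi_1(T(x^*,\theta))=p_j$, then the corresponding point $(y^*,\Phi_i(\theta))\in\omega(v)$ satisfies $\pi_1(S(y^*,\Phi_i(\theta)))=q_j$. Choose $n_k\in I(u,e_i,\theta)$ with $x_{n_k}\to x^*$ from one side so that $\sigma_{n_k+1}(u)=\ell$ is constant for all large $k$; the limit $x_{n_k+1}\to p_j$ forces $\ell\in\{j-1,j\}$ and identifies $p_j$ as a specific endpoint of $e_\ell$. Since the itineraries agree, $\sigma_{n_k+1}(v)=\ell$, so $y_{n_k+1}\in f_\ell$. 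The sequence $\{n_k+1\}$ eventually lies in $I(u,e_\ell,\theta')$ for an appropriate direction $\theta'$, and applying Lemma \ref{l2} to $(e_\ell,\theta')$ shows that $\{x_{n_k+1}\}$ and $\{y_{n_k+1}\}$ have the same combinatorial order on $e_\ell$ and $f_\ell$ respectively; hence $y_{n_k+1}$ converges to the corresponding endpoint of $f_\ell$, which is exactly $q_j$. Combinatorial order on the pair $(e_i,\theta)\leftrightarrow(f_i,\Phi_i(\theta))$ guarantees $y_{n_k}\to y^*\in f_i$, and continuity of the straight-line flow at $(y^*,\Phi_i(\theta))$ (nearby trajectories with hit points converging to $q_j$) yields $\pi_1(S(y^*,\Phi_i(\theta)))=q_j$.

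Summing over all pairs $(e_i,\theta)$, the map $(x^*,\theta)\mapsto(y^*,\Phi_i(\theta))$ is an injection from the elements of $\omega(u)$ pointing at $p_j$ into those of $\omega(v)$ pointing at $q_j$; reversing the roles of $(P,u)$ and $(Q,v)$ gives the reverse injection, yielding $N(u,p_j)=N(v,q_j)$. I expect the main obstacle to be the second paragraph: carefully tracking the point $(x^*,\theta)$ through two applications of Lemma \ref{l2} (once on $e_i$ to obtain $y^*\in f_i$, once on $e_\ell$ to verify that the limit of $y_{n_k+1}$ is exactly $q_j$), together with the subtle continuity argument needed at $(y^*,\Phi_i(\theta))$ where $S$ is itself discontinuous.
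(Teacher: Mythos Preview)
Your proposal is correct and follows essentially the same route as the paper: both arguments use Lemma \ref{l2} to transport an element $(x,\theta)\in\omega(u)$ pointing at $p_j$ to an element $(y,\vartheta)\in\omega(v)$ pointing at $q_j$ via the preservation of combinatorial order, and then conclude by symmetry. The one genuine difference is the injectivity step: the paper records that the corresponding points satisfy $\sigma^-((x,\theta))=\sigma^-((y,\vartheta))$ and invokes Theorem \ref{t2} (distinct non-periodic points have distinct backward itineraries), whereas you bypass backward itineraries entirely and argue that the pair $(e_i,\theta)$ determines $(x^*,\theta)$ uniquely, so the bijection $\Phi_i\colon\Theta_i^P\to\Theta_i^Q$ already forces the map to be injective. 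Your version is a bit more self-contained on that point, and you also spell out the ``pointing at $q_j$'' step (a second application of Lemma \ref{l2} on the side $e_\ell$ together with density from Proposition \ref{p-summary}(iii)) which the paper compresses into the word ``clearly''.
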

\begin{proof} Let $(x,\theta)\in \omega(u)$ point at $p_j$, $x\in e=e_i$. Since $u$ is non-exceptional, $(x,\theta)\in BV_P$ is not periodic and it is a bothside limit of
$\{T^nu\}_{n\in I}$, where $I=I(u,e,\theta)$. Using Lemma \ref{l2} we can consider a direction $\vartheta\in \pi_2((f_i\times (-\frac{\pi}{2},\frac{\pi}{2}))\cap\omega(v))$ such that $I(u,e,\theta)=I(v,f,\vartheta)$ and the (dense) sequences $\{\pi_1(T^nu)\}_{n\in I}$, $\{\pi_1(S^nv)\}_{n\in I}$ have the same combinatorial order. Clearly, there is a unique element $(y,\vartheta)\in\omega(v)$ (with the same address as $(x,\theta)$) pointing at $q_j$ and satisfying $(y,\vartheta)\in BV_Q$, $\sigma^-((x,\theta))=\sigma^-((y,\vartheta))$. The last equality and Theorem \ref{t2} imply $N(u,p_j)\le N(v,q_j)$. The
argument is symmetric, thus we obtain $N(u,p_j)=N(v,q_j)$. \end{proof}

\section{Rational versus Rational - Main Results}\label{srr}

Let $A(p) \in (0,2\pi)\setminus \{\pi\}$ denote the angle at the corner $p \in C_P$.
\begin{theorem}\label{p-sameangles}Let $P,Q$ be code equivalent with leaders $u,v$; $P$ rational, $u$
non-exceptional. Then $A(p_i)=A(q_i)$, $1\le i\le k$.
\end{theorem}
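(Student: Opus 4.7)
\emph{Plan.} Theorem~\ref{t6} shows that $Q$ is rational and $v$ is non-exceptional, so every result of the preparatory section applies symmetrically in $P$ and $Q$. By Corollary~\ref{l4}, $N_P=N_Q=:N$.

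Write $A(p_j)=\pi m_j/n_j$ in lowest terms; the goal is to recover both $m_j$ and $n_j$ from the code. For the numerator, Lemma~\ref{l5} yields $N(u,p_j)=N(v,q_j)$. Since $\omega(u)=R_{\pi_2(u)}$ by Proposition~\ref{p-summary}(iii) and $p_j$ lifts to a conical singularity of $R_P$ of cone angle $2\pi m_j$, a standard separatrix count on the translation surface $R_P$ gives $N(u,p_j)=2Nm_j$ when $m_j\ge 2$ (each of the $2N$ admissible directions contributes $m_j$ incoming separatrices at $p_j$) and $N(u,p_j)=0$ when $m_j=1$ (regular point, no separatrices). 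Comparing the $P$ and $Q$ counts forces $m_j^P=m_j^Q$ for every $j$.

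To match $n_j$ as well, I plan to analyse the near-corner excursions of the leader. The density condition (C1), together with Corollary~\ref{c1} and Proposition~\ref{p1}, ensures that $u$ approaches $p_j$ arbitrarily closely. Unfolding the wedge at $p_j$ into its fan of $2n_j$ copies, the supremum of the lengths of alternating runs of symbols $j-1,j$ inside $\sigma^+(u)$ equals $\lceil n_j/m_j\rceil$; since $\sigma^+(u)=\sigma^+(v)$ the same number must arise from the wedge at $q_j$, and combined with the already-established equality $m_j^P=m_j^Q$ and $\gcd(m_j,n_j)=1$ this pins down $n_j$ in most configurations. Residual ambiguity (the problematic range is $m_j\ge 3$, where two candidates for $n_j$ can share the same value of $\lceil n_j/m_j\rceil$) is removed by tracking the exit direction after a pair of consecutive reflections off $e_{j-1}$ and $e_j$: the composition of these two reflections is rotation by $\pm 2A(p_j)$, and the reflection-compatibility diagram $\phi_j\circ\mathrm{reflect}_{e_j}=\mathrm{reflect}_{f_j}\circ\phi_{j-1}$ for the order-preserving bijections $\phi_i$ furnished by Lemmas~\ref{l1}--\ref{l2} transports the rotation from $P$ to $Q$, where it must agree with $\pm 2A(q_j)$. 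Hence $A(p_j)=A(q_j)$.

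The hardest step is the last one: converting the qualitative order-preservation of Lemma~\ref{l1} into the quantitative equality of rotation angles. Cyclically unwinding the compatibility diagram around the sequence of sides constrains the cumulative product of reflections in $P$ and $Q$ to agree, and one must then argue that this pins down each individual angle $A(p_j)$, not merely the global sum $\sum A(p_j)=(k-2)\pi$. An alternative route, if the wedge-bouncing argument proves insufficient in corner cases, would be to combine Lemma~\ref{l2} with the direct analysis of how the finite direction set on each side $e_i$ is mapped to its counterpart on $f_i$, reading off the relative side orientations (and thus the angles at the corners) from the resulting rigid combinatorial constraints.
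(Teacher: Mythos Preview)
Your separatrix count is wrong, and this breaks the proposed decomposition into ``first $m_j$, then $n_j$.'' The vertex $p_j$ does not lift to a single cone point of $R_P$: since $R_P$ is assembled from $2N$ copies of $P$ and exactly $2n_j$ of them are glued together at each occurrence of $p_j$, there are $N/n_j$ cone points over $p_j$, each of total angle $2\pi m_j$. For the single flow direction on $R_P$ each such cone point receives exactly $m_j$ incoming trajectories, so
\[
N(u,p_j)\;=\;\frac{N}{n_j}\cdot m_j\;=\;\frac{N}{\pi}\,A(p_j).
\]
In particular the case $m_j=1$ does \emph{not} give $N(u,p_j)=0$: the lifted points are then regular on $R_P$, but $p_j$ is still a corner of the billiard table $P$, and the unique flow line through each regular lift still contributes one element of $\omega(u)$ pointing at $p_j$. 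With the correct formula, Lemma~\ref{l5} together with $N_P=N_Q$ from Corollary~\ref{l4} already yields $A(p_j)=A(q_j)$ outright: what the count detects is the ratio $m_j/n_j$, not $m_j$ by itself. So your wedge-bouncing and rotation-angle arguments for $n_j$, which you yourself flag as the hardest and least certain step, are not needed at all once the count is repaired.

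The paper argues in the opposite order. It first proves $n_j^P=n_j^Q$ via a direct gluing argument at the corner: using Lemma~\ref{l1}, the crossings of the orbit of $u$ between consecutive copies of $P$ glued at $p_i$ force the same cyclic gluing pattern among the corresponding copies of $Q$ at $q_i$, hence the same number $2n_i$ of copies. Only afterwards does it invoke Lemma~\ref{l5}; with $N$ and $n_i$ already matched, the remaining factor $m_i$ is forced by the separatrix count at a single cone point. Thus the paper replaces your speculative run-length and reflection-compatibility analysis by a concrete combinatorial identification of the local gluing data, and uses Lemma~\ref{l5} only as a finishing step rather than as the opening move.
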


\begin{proof} Theorem \ref{t6} implies that also $Q$ is
rational with  a non-exceptional leader $v$. Let $k=\# C_P=\#C_Q$; Since $P$, $Q$ are rational and
simply connected, $A(p_i)=\pi m_i^P/n_i^P$ and $A(q_i)=\pi
m_i^Q/n_i^Q$, where $m_i^P$, $n_i^P$, resp.\  $m_i^Q$, $n_i^Q$ are
coprime integers. In what follows, we will show that $n_i^P=n_i^Q$
and $m_i^P=m_i^Q$.

We know from Corollary \ref{l4} that $N_P=N_Q=N$. Thus,
both rational billiards correspond to the same dihedral group $D_N$.

Second, consider the local picture around the $i$th vertex $p_i$.
Denote the two sides which meet at $p_i$ by $e$ and $e'$. Suppose
there are $2n^P_i$ copies of $P$ which are glued at $p_i$. Enumerate
them in a cyclic counterclockwise fashion $1,2,\dots,2n^P_i$. Since
$u$ is non-exceptional its orbit is minimal, so it visits each of
the copies of $P$ glued at $p_i$. In particular the orbit crosses
each of the gluings (copy $j$ glued to copy $j+1$).

Now consider the orbit of $v$. We need to show that there are the
same number of copies of $Q$ glued at $q_i$. Fix a
$j\in\{1,\dots,2n^P_i\}$ viewed as a cyclic group. Since $u$ is
non-exceptional the orbit of $u$ must pass from copy $j$ to copy
$j+1$ of $P$ or vice versa from copy $j+1$ to copy $j$. Suppose that
we are at the instant that the orbit $u$ passes from copy $j$ to
copy $j+1$ of $P$. At this same instant the orbit of $v$ passes
through a side. We label the two copies of $Q$ by $j$ and $j+1$
respectively. This labeling is consistent for each crossing from $j$
to $j+1$.

Since this is true for each $j$, the combinatorial data of the orbit
$u$ glue the corresponding $2n^P_i$ copies of $Q$ together in the
same cyclic manner as the corresponding copies of $P$. Note that the
common point of the copies of $Q$ is a common point of $f$ and
$f'$ - the sides of $Q$ corresponding to $e,e'$ - thus it is necessarily the point $q_i$. In
particular, since Lemma \ref{l1} applies, we have
$2n^P_i$ copies of $Q$ glued around $q_i$ to obtain an angle which
is a multiple of $2\pi$. Thus $2n^Q_i$ must divide $2n^P_i$. The
argument is symmetric, thus we obtain $2n^P_i$ divides $2n^Q_i$. We
conclude that $n^P_i=n^Q_i$.

 Third, let us show that $m_i^Q= m^P_i$. Realizing the gluing of
 $2n_i^P$ copies of $P$ together at $p_i$ we get a point $p\in R_P$ with total angle of
 $2\pi m_i^P$. If $m_i^P>1$, the point $p$ is a cone angle
$2\pi m_i^P$ singularity. In any case, for the direction $\theta$
and the corresponding constant flow on $R_P$, there are
$m_i^P$ incoming trajectories
that enter $p$ on the surface $R_P$, hence also $m_i^P$
points in $V_P$ that finish their trajectory after the first iterate
at the corner $p_i$. Repeating all arguments for $Q$ and
$\vartheta=\pi_2(v)$, one obtain $m_i^Q$ points in $V_Q$ that
finish their trajectory after the first iterate at the corner
$q_i$. Since such a number has to be preserved by Lemma \ref{l5}, the inequality $m_i^P\neq m_i^Q$ contradicts
our assumption that $P$ and $Q$ are code equivalent. Thus, $m_i^Q= m^P_i$.
\end{proof}

A triangle is determined (up to similarity) by its angles, thus
Theorem \ref{p-sameangles} implies
\begin{corollary}\label{c-triangle}
Let $P,Q$ be code equivalent with leaders $u,v$, $P$ a rational triangle, $u$
non-exceptional. Then $Q$ is similar to $P$.
\end{corollary}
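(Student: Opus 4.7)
The plan is essentially to apply Theorem \ref{p-sameangles} and then invoke a classical fact from plane geometry, so the work has already been done.

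First, I would observe that code equivalent polygons have the same number of sides (as noted just after Definition \ref{d1}). Since $P$ is a triangle, it has $3$ sides, and hence so does $Q$; in particular $Q$ is also a triangle.

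Next, the hypotheses of Theorem \ref{p-sameangles} are satisfied: $P$ is rational and $u$ is non-exceptional. Applying that theorem yields $A(p_i) = A(q_i)$ for $i=1,2,3$, so the corresponding angles of the two triangles $P$ and $Q$ agree.

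Finally, I would invoke the AAA similarity criterion from Euclidean geometry: two triangles with equal corresponding angles are similar. Applied to $P$ and $Q$, this gives that $Q$ is similar to $P$, which is exactly the desired conclusion. There is no genuine obstacle here — the substantive content is entirely inside Theorem \ref{p-sameangles}, and this corollary is just the specialization to the case where the angle data determines the polygon up to similarity.
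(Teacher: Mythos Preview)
Your proof is correct and matches the paper's own argument exactly: apply Theorem \ref{p-sameangles} to get equal corresponding angles, then use that a triangle is determined up to similarity by its angles. The only addition you make is the explicit remark that $Q$ must also be a triangle, which the paper leaves implicit.
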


For a $P$ rational, the union of edges of $R_P$ - we call it the skeleton of $R_P$ - will be denoted by $K_P$.

It follows from Proposition \ref{p-summary} that for $P$ rational with $u\in V_P$ non-exceptional, $\omega(u)=K_P$.

\begin{proposition}\label{p3}Let $P$, $Q$ be code equivalent with leaders $u,v$; $P$ rational, $u$ non-exceptional. The map $\Psi\colon~\orb(u)\to \orb(v)$ defined by $\Psi(T^nu)=S^nv$, $n\in\bbn\cup\{0\}$ can be extended to the homeomorphism $\Phi\colon~K_P\to K_Q$ satisfying (for all
$n\in\bbz$ for which the image is defined)
\begin{equation*}\Phi(T^n\tilde u)=S^n\Phi(\tilde u),~\tilde u\in K_P.\end{equation*}
 \end{proposition}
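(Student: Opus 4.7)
The plan is to construct $\Phi$ by a density-plus-order argument, extending $\Psi$ side-by-side and then checking that everything fits together at corners and under the dynamics.

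First, the setup. By Theorem \ref{t6}, $Q$ is rational and $v$ is non-exceptional, so by the remark preceding the proposition $\omega(u)=K_P$ and $\omega(v)=K_Q$; the forward orbits $\orb(u)$ and $\orb(v)$ are therefore dense in $K_P$ and $K_Q$ respectively. By Corollary \ref{l4}, $N_P=N_Q=N$, so both skeletons are built from $2N$ copies of $P$ (resp.~$Q$).

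Second, I would use Lemma \ref{l2} as the main structural input. For each side $e_i$ of $P$ and each direction $\theta\in\pi_2((e_i\times(-\tfrac{\pi}{2},\tfrac{\pi}{2}))\cap\omega(u))$, the lemma provides a unique $\vartheta=\vartheta(i,\theta)$ with $I=I(u,e_i,\theta)=I(v,f_i,\vartheta)$ and with the sequences $\{\pi_1(T^n u)\}_{n\in I}$ and $\{\pi_1(S^n v)\}_{n\in I}$ having the same combinatorial order. By Proposition \ref{p-summary}(iii) both sequences are dense in $e_i$ and $f_i$ respectively. Hence the order-preserving bijection $\pi_1(T^n u)\mapsto\pi_1(S^n v)$ extends uniquely to an orientation-preserving homeomorphism $\phi_{i,\theta}\colon e_i\to f_i$ sending endpoints to endpoints.

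Third, I would define $\Phi$ floor-by-floor: for $(x,\theta)\in K_P$ lying on the copy of $e_i$ indexed by $\theta$, set
$$\Phi(x,\theta) := (\phi_{i,\theta}(x),\,\vartheta(i,\theta)).$$
On the interior of each $e_i$ this is unambiguous. At a corner $p_j$, the point $(p_j,\theta)$ is an endpoint of two different edges $e_{j-1}$ and $e_j$ of $R_P$ which get identified, and I need $\phi_{j-1,\theta}(p_j)=\phi_{j,\theta'}(p_j)=q_j$ after the corresponding gluing in $R_Q$. This compatibility follows from Lemma \ref{l1} and the argument inside the proof of Theorem \ref{p-sameangles}: the $2n_j^P$ copies of $P$ glued at $p_j$ are matched with the $2n_j^Q=2n_j^P$ copies of $Q$ glued at $q_j$ in the same cyclic order, because the combinatorial data of the common itinerary dictates the crossings from copy to copy. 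This is the main obstacle of the argument: verifying that the local side-by-side maps agree at the corners so that they glue into a well-defined map on the surface $K_P$.

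Fourth, routine verifications close the proof. Equation $\Phi(T^n u)=S^n v$ holds by construction of $\phi_{i,\theta}$ on the dense subset $\orb(u)$, so $\Phi$ extends $\Psi$. Continuity of $\Phi$ follows because each $\phi_{i,\theta}$ is continuous and the gluings are compatible. For equivariance, let $\tilde u\in K_P$ be approximated by $T^{m_k}u$; since $T$ is continuous at non-corner points, $T^n\tilde u$ is approximated by $T^{m_k+n}u$ wherever defined, and then
$$\Phi(T^n\tilde u)=\lim_k S^{m_k+n}v=S^n\lim_k S^{m_k}v=S^n\Phi(\tilde u).$$
Finally, interchanging the roles of $P$ and $Q$ produces a map $K_Q\to K_P$ which is a two-sided inverse to $\Phi$, so $\Phi$ is a homeomorphism.
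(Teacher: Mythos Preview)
Your proposal is correct and is essentially a detailed unpacking of the paper's one-line proof, which merely cites Proposition~\ref{p-summary}, Theorem~\ref{t6} and Lemma~\ref{l1}. You route the edge-by-edge construction through Lemma~\ref{l2} (itself a consequence of those ingredients) and make explicit the corner-compatibility check via Lemma~\ref{l1} and the gluing argument from Theorem~\ref{p-sameangles}, which is exactly what the paper's terse sketch expects the reader to supply.
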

\begin{proof}  Proposition \ref{p-summary}, Theorem \ref{t6} and Lemma \ref{l1} enable us to extend $\Psi$ to the required homeomorphism $\Phi\colon~K_P\to K_Q$.\end{proof}

It is a well known fact that the billiard map $T$ has a
natural invariant measure on its phase space $V_P$, the phase length given by the formula
$\mu=\sin\theta~\di x~\di\theta$ - see \cite{MT}.  In the case, when $P$ is rational and the corresponding
billiard flow is dense in the surface $R_P$,
 the measure $\mu$ sits on the
skeleton $K_P$ of $R_P$. In particular, an edge $e$
of $K_P$ associated with $\theta$ has the $\mu$-length $\vert
e\vert\cdot\sin\theta$.

For any
rational polygon with $N=2$ we can speak - up to rotation - about
horizontal, resp. vertical sides.  Two such polygons, $P$ and $Q$ with
sides $e_i$ resp.\ $f_i$, are
{\em affinely similar}  if they have the same number of corners/sides,
corresponding angles equal and there are positive numbers $a,b\in\bbr$ such that
$\vert e_i\vert/\vert f_i\vert=a$,
resp.\ $\vert e_i\vert/\vert f_i\vert=b$ for any pair of corresponding horizontal, resp. vertical sides.
Recall the map $\Phi$ defined in Proposition \ref{p3}.

As before the number $N$ is defined as the
least common multiple of $n_i$'s, where the the angles of a simply
connected rational polygon $P$ are $\pi m_i/n_i$.

\begin{theorem}\label{t5}Let $P$, $Q$ be code equivalent with leaders $u,v$; $P$ rational, $u$ non-exceptional.  Denote $\mu$, $\nu$ the phase length measure sitting on the skeleton $K_P$, $K_Q$ respectively. If $\nu=\Phi^*\mu$ then
\begin{enumerate}
\item{}  if $N=N_P\ge 3$, $Q$ is similar to $P$;
\item{}  if $N=N_P = 2$, $Q$ is affinely similar to $P$.
\end{enumerate}
\end{theorem}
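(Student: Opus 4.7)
The plan is to use Theorem~\ref{p-sameangles} to align $P$ and $Q$ so that corresponding sides are parallel, then extract side--length ratios from the measure condition via a sum--of--squares identity on the $D_N$--orbit of directions.

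First I normalise: by Theorem~\ref{p-sameangles} and Corollary~\ref{l4}, $P$ and $Q$ share angles at corresponding vertices and the common value $N$; after rotating $Q$, each side $f_i$ is parallel to $e_i$, with common outward--normal direction $\alpha_i$. Let $\psi$, $\chi$ be the absolute directions of the translation flows on $R_P$, $R_Q$ corresponding to the leaders $u$, $v$. By Proposition~\ref{p-summary}(iii) the $N$ directions at $e_i$ in $\omega(u)$ (measured from the inward normal) are the inward--pointing members of the $D_N$--orbit $\{g\psi - \alpha_i : g \in D_N\}$; similarly for $f_i$ with $\chi$ in place of $\psi$. Lemma~\ref{l1} gives an order--preserving bijection $j \mapsto j$ between the $N$ directions $\theta_i^j$ on $e_i$ and the $N$ directions $\vartheta_i^j$ on $f_i$. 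The measure--preservation hypothesis $\nu = \Phi^*\mu$, applied to each pair of corresponding edges of $K_P$ and $K_Q$, then reads
\[
|e_i|\,|\sin \theta_i^j| \;=\; |f_i|\,|\sin \vartheta_i^j|, \qquad 1 \le j \le N.
\]

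The key step is to square and sum over $j$. Because the tangent reflection at each side of $P$ is a generator of $D_N$, it pairs every inward direction with an outward one of equal $|\sin|$, so
\[
\sum_{j=1}^{N}\sin^2\theta_i^j \;=\; \tfrac{1}{2}\sum_{g \in D_N}\sin^2(g\psi - \alpha_i).
\]
Writing $D_N = \{R_k, R_k r : 0 \le k < N\}$ with $R_k\psi = \psi + 2\pi k/N$ and $r\psi = -\psi$, and using $2\sin^2 y = 1 - \cos 2y$, the cosine sum reduces to a multiple of the geometric sum $\sum_{k=0}^{N-1} e^{4\pi i k/N}$. This sum vanishes precisely when $N \ge 3$, in which case $\sum_j \sin^2\theta_i^j = N/2$ \emph{independently of} $\psi$ and of $\alpha_i$. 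The same identity holds for $Q$ with the same value $N/2$, so squaring and summing the displayed equation yields $|e_i|^2 = |f_i|^2$ for every $i$; combined with equal angles this makes $Q$ congruent, in particular similar, to $P$, proving~(1).

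For $N = 2$ every angle of $P$ is $\pi/2$ or $3\pi/2$, so after rotation all sides are horizontal or vertical; the same computation now gives $\sum_j \sin^2\theta_i^j = 2\cos^2\psi$ on horizontal sides and $2\sin^2\psi$ on vertical ones, with analogous expressions in $\chi$ for $Q$. The measure equations thus impose the constant ratio $|f_i|/|e_i| = |\cos\psi|/|\cos\chi|$ on horizontal sides and the constant ratio $|f_i|/|e_i| = |\sin\psi|/|\sin\chi|$ on vertical sides, which is exactly affine similarity, proving~(2). The main obstacle I foresee is the careful identification of the $N$ inward directions at each side with the inward half of the $D_N$--orbit, and verifying that the tangent reflection (which is a generator of $D_N$ by the rationality of $P$) closes this pairing up inside the orbit; once this is set up, the Fourier--sum computation is essentially automatic.
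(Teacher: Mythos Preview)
Your argument is correct and reaches the same conclusion as the paper, but by a genuinely different route.

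The paper's proof fixes a side $e$ and picks \emph{two} directions $\theta$ and $\theta+2\pi/N$ in the $D_N$--orbit, together with their images $\vartheta$ and $\vartheta+2\pi/N$ under $\Phi$ (invoking Lemma~\ref{l1}); the pair of equations $|e|\sin\theta=|f|\sin\vartheta$ and $|e|\sin(\theta+2\pi/N)=|f|\sin(\vartheta+2\pi/N)$ is then solved directly to force $\theta=\vartheta$ and hence $|e|=|f|$. For $N=2$ only one independent equation survives per side--type, giving the affine conclusion. Your proof instead uses \emph{all} $N$ edge equations simultaneously: you square and sum, and the identity $\sum_{g\in D_N}\sin^2(g\psi-\alpha_i)=N$ (valid exactly when $\sum_{k}e^{4\pi i k/N}=0$, i.e.\ $N\ge 3$) makes the sum independent of both $\psi$ and $\alpha_i$, so $|e_i|^2\cdot N/2=|f_i|^2\cdot N/2$ and again $|e_i|=|f_i|$.

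What each buys: the paper's two--equation argument is shorter and as a byproduct shows $\theta=\vartheta$, i.e.\ the leaders $u,v$ travel in the same absolute direction once the polygons are aligned. Your averaging argument is more symmetric and sidesteps the one delicate point in the paper's version, namely why $\Phi[e,\theta+2\pi/N]=[f,\vartheta+2\pi/N]$ with the \emph{same} shift $2\pi/N$ on the $Q$ side; Lemma~\ref{l1} literally only gives order preservation of the $N$ directions, so matching the specific rotation requires a further word, whereas your sum over all $j$ makes the particular bijection irrelevant. A small remark: you cite Lemma~\ref{l1} for the bijection, but you never actually use the order preservation; the edge--to--edge bijection from Proposition~\ref{p3} already suffices for your squared sum. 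Also, the phase--length density in the paper's convention ($\theta$ measured from the inward normal) is $\cos\theta$ rather than $\sin\theta$; since $\sum_j\cos^2\theta_i^j=N-\sum_j\sin^2\theta_i^j=N/2$ as well, your computation goes through unchanged, but it is worth noting.
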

\begin{proof}
We know from Theorem \ref{t6} that under our assumptions also $Q$ is rational with $v$
non-exceptional. By Lemma \ref{l4}, $N_P=N_Q$.

1) For a side $e$ of $P$ and a $\theta\in [-\frac{\pi}{2},\frac{\pi}{2}]$ denote $[e,\theta]$ an edge of $K_P$ associated with $e$ and $\theta$. Let $[f,\vartheta]=\Phi([e,\theta]))$. Since $\nu=\Phi^*\mu$ and $\mu,\nu$ are the phase lengths,
\begin{equation}\label{e-cosinus}\vert e\vert\sin\theta=\vert f\vert\sin\vartheta.\end{equation}
Assume that the least common multiple $N$ of the denominators of
angles of $P$ is greater than or equal to $3$. The polygons $P$, $Q$ correspond to the same
dihedral group $D_N$ generated by the reflections in lines through
the origin that meet at angles $\pi/N$. The orbit of
$\theta^+_0=\pi_2(u_0)$, resp.\  $\vartheta^+_0=\pi_2(u_0)$ under
$D_N$ consists of $2N$ angles
$$\theta_j^+=\theta_0^++2j\pi/N,~\theta_j^-=\theta_0^-+2j\pi/N,$$
resp.\
$$\vartheta_j^+=\vartheta_0^++2j\pi/N,~\vartheta_j^-=\vartheta_0^-+2j\pi/N.$$
Since $N\ge 3$, for each side $e$, resp.\  $f$ one can consider the
angles
$$\theta,\theta+2\pi/N,\text{ resp. }\vartheta,\vartheta+2\pi/N$$ such
that by Lemma \ref{l1} $\Phi[e,\theta]=(f,\vartheta)$
and $\Phi[e,\theta+2\pi/N]=[f,\vartheta+2\pi/N]$. Then as in
(\ref{e-cosinus}),
\begin{equation*}\label{e-cosina}\vert e\vert\sin\theta=\vert f\vert\sin\vartheta,~\vert e\vert\sin(\theta+2\pi/N)=
\vert f\vert\sin(\vartheta+2\pi/N),\end{equation*} hence after some
routine computation we get $\vert e\vert=\vert f\vert$.

2)  By Theorem \ref{p-sameangles} the polygons $P$ and $Q$ are quasisimilar hence we can speak about corresponding horizontal, resp. vertical sides. Similarly as above, for a side $e$ of $P$, some $\theta\in [-\frac{\pi}{2},\frac{\pi}{2}]$
and $[f,\vartheta]=\Phi([e,\theta])$,
\begin{equation*}\label{e-cosin}\vert e\vert\sin\theta=\vert f\vert\sin\vartheta,\end{equation*}
where $\theta$, resp. $\vartheta$ can be taken the same for any pair of corresponding horizontal, resp. vertical sides. Thus, the number $a=\vert e\vert/\vert f\vert$, resp.  $b=\vert e\vert/\vert f\vert$ does not depend on a concrete choice of a pair of corresponding horizonal, resp. vertical sides. This finishes the proof of our theorem.
\end{proof}

In a rational polygon we say that a point $u$ is
generic if it is non-exceptional, has bi-infinite orbit
and the billiard  map  restricted to the skeleton $K_P$ of an invariant surface $R_P\sim R_{\pi_2(u)}$
has a single invariant measure (this measure is then automatically the
measure $\mu$).

\begin{corollary}\label{cc1}Let $P$, $Q$ be code equivalent with leaders $u,v$; $P$ rational, $u$ generic.  Then
\begin{enumerate}
\item{}  if $N=N_P\ge 3$, $Q$ is similar to $P$;
\item{}  if $N=N_P = 2$, $Q$ is affinely similar to $P$.\end{enumerate}\end{corollary}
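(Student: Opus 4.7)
The plan is to reduce Corollary~\ref{cc1} to Theorem~\ref{t5} by using the genericity of $u$ to argue that $\nu$ agrees with $\Phi_*\mu$ up to a single multiplicative constant, and then to rerun the argument of Theorem~\ref{t5} with that constant absorbed.

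First, since $u$ is non-exceptional (being generic), Theorem~\ref{t6} guarantees that $Q$ is rational and $v$ is non-exceptional, so Proposition~\ref{p3} supplies a homeomorphism $\Phi\colon K_P\to K_Q$ intertwining $T|_{K_P}$ and $S|_{K_Q}$. The genericity hypothesis on $u$ says precisely that $\bar\mu:=\mu/\mu(K_P)$ is the unique $T$-invariant Borel probability measure on $K_P$. Since $\Phi$ is a topological conjugacy, any $S$-invariant probability on $K_Q$ pulls back to a $T$-invariant probability on $K_P$ and must therefore coincide with $\bar\mu$; hence $\Phi_*\bar\mu$ is the unique $S$-invariant probability on $K_Q$. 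As the phase length measure $\nu$ is $S$-invariant, its normalization $\nu/\nu(K_Q)$ must equal $\Phi_*\bar\mu$, yielding
\[
\Phi_*\mu \;=\; c\,\nu,\qquad c:=\mu(K_P)/\nu(K_Q)>0.
\]

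Next, I adapt the computation in the proof of Theorem~\ref{t5}. For an edge $[e,\theta]$ of $K_P$ with $\Phi([e,\theta])=[f,\vartheta]$, comparing $\mu$- and $\nu$-masses on that edge gives
\[
|e|\sin\theta \;=\; c\,|f|\sin\vartheta.
\]
In case $N\ge 3$, Lemma~\ref{l1} lets me apply this identity also to the pair $[e,\theta+2\pi/N]$ and $[f,\vartheta+2\pi/N]$; expanding $\sin(\theta+2\pi/N)$, subtracting $\cos(2\pi/N)$ times the first equation from the second, and using $\sin(2\pi/N)\neq 0$ yields $|e|\cos\theta=c\,|f|\cos\vartheta$, and then squaring and adding produces $|e|=c\,|f|$ for every pair of corresponding sides. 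Combined with Theorem~\ref{p-sameangles}, this is precisely similarity of $Q$ to $P$ with ratio $c$. In case $N=2$, the identity $|e|\sin\theta=c\,|f|\sin\vartheta$ (with $\theta,\vartheta$ common to all horizontal, resp.\ vertical, sides) directly gives a single constant ratio for horizontal pairs and another for vertical pairs, which together with the matching angles is the definition of affine similarity.

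The main obstacle is simply the bookkeeping with the scalar $c$: the two phase length measures carry no canonical common normalization, so the exact equality $\nu=\Phi^*\mu$ assumed in Theorem~\ref{t5} need not hold on the nose, and one has to check that the elimination step still produces genuine (affine) similarity rather than only proportional phase lengths. A conceptually cleaner alternative is to rescale $Q$ by the factor $c$ to a polygon $Q'$ whose phase length measure equals $c\nu$; since any billiard system is conjugate to its rescaling, $\Phi$ lifts to a conjugation $K_P\to K_{Q'}$ with $\Phi_*\mu$ equal to the phase length of $Q'$, so Theorem~\ref{t5} applies verbatim to $(P,Q')$ and the (affine) similarity is then transported back to $(P,Q)$.
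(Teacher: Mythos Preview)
Your proof is correct and follows essentially the same route as the paper's: conjugacy via $\Phi$ transfers unique ergodicity from $(K_P,T)$ to $(K_Q,S)$, forcing the normalized phase lengths to match, after which the computation of Theorem~\ref{t5} yields (affine) similarity. The only difference is that the paper simply asserts $\nu=\Phi^*\mu$ and invokes Theorem~\ref{t5} directly, whereas you keep track of the scalar $c=\mu(K_P)/\nu(K_Q)$ and rerun the elimination with it present (or, equivalently, rescale $Q$); this is a genuine gain in precision, since unique ergodicity only pins down invariant \emph{probability} measures and the unnormalized phase lengths need not have equal total mass.
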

\begin{proof}Obviously the dynamical systems $(K_P,T)$, $(K_Q,S)$ are conjugated via the conjugacy $\Phi$, hence by our assumption on the element $u$, both of them are uniquely ergodic. It means that  $\nu=\Phi^*\mu$, where $\mu ,\nu$ are the phase lengths and Theorem \ref{t5} applies.\end{proof}

\section{Code versus order equivalence}\label{scoe}

In \cite{BT} we have defined another kind of equivalence relation on the set of simply connected polygons.
Namely, we used

\begin{definition}\label{d2}
We say that polygons (or polygonal billiards) $P,Q$ are order equivalent if for some $u\in FV_P$, $v\in FV_Q$
\begin{itemize}
   \item[(O1)] $\overline{\{\pi_1(T^nu)\}}_{n\ge 0}=\partial  P$, $\overline{\{\pi_1(S^nv)\}}_{n\ge
   0}=\partial  Q$,
     \item[(O2)] the sequences $\{\pi_1(T^nu)\}_{n\ge 0}$, $\{\pi_1(S^nv)\}_{n\ge 0}$
  have the same combinatorial order;
  \end{itemize}
  the points $u,v$ will be called leaders.
 \end{definition}

It is easy to see that any two rectangles are order equivalent.

Let $t=\{x_n\}_{n\ge 0}$ be a sequence which is dense in $\partial  P$. The
$t$-address $a_t(x)$ of a point $x\in\partial  P$ is the set of  all increasing
sequences $\{n(k)\}_k$ of non-negative integers satisfying
$\lim_kx_{n(k)}=x$. It is clear that any $x\in\partial  P$ has a nonempty
$t$-address and $t$-addresses of two distinct points from $\partial  P$ are
disjoint.

For order equivalent polygons $P$, $Q$  with leaders $u,v$, we
will consider addresses with respect to the sequences given by
Definition \ref{d2}(O2):
\begin{equation*}t=\{\pi_1(T^nu)\}_{n\ge 0},~s=\{\pi_1(S^nv)\}_{n\ge 0}.\end{equation*}

 It is an easy exercise to prove that the
map $\phi\colon\partial  P\to \partial  Q$ defined by
\begin{equation}\label{e18}\phi(x)=y~\text{ if }a_t(x)=a_s(y)\end{equation}
is a homeomorphism.

 As before, the set of the corners $p_1,\dots,p_k$ of $P$ is denoted by $C_P$.

\begin{theorem}\label{t3} Suppose $P$, $Q$ are order equivalent with leaders $u,v$; $P$ rational, $u$ non-exceptional.
Then $P$, $Q$ are code equivalent with leaders $u,v$. \end{theorem}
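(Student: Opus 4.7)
The plan is as follows. Order equivalence yields the address homeomorphism $\phi: \partial P \to \partial Q$ of (\ref{e18}), which preserves the counterclockwise cyclic order. Non-exceptionality of $u$ in the rational polygon $P$ lets us invoke Proposition \ref{p-summary}(iii): the orbit of $u$ visits only $2N$ floors, is dense in each side $e_i$, and by Corollary \ref{c1} is uniformly recurrent. Combined with (O1), this gives $\phi(x_n) = y_n$ for all $n \ge 0$. Once one establishes $\phi(C_P) = C_Q$, one can label the sides of $Q$ so that $f_i = \phi(e_i)$; from this, $x_n \in e_{\sigma_n(u)}^\circ$ if and only if $y_n \in f_{\sigma_n(u)}^\circ$, giving $\sigma^+(u) = \sigma^+(v)$, and together with (O1) this yields (C1)--(C2).

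The central task is therefore to show $\phi(C_P) = C_Q$. I would argue by contradiction. Suppose some corner $p_{i+1} \in C_P$ has $q := \phi(p_{i+1}) \in f^\circ$ for a side $f$ of $Q$. Using Proposition \ref{p-summary}(iii), the orbit of $u$ is dense on both $e_i$ and $e_{i+1}$; combined with the finiteness of its direction set, one extracts subsequences $(x_{n_k}) \subset e_i^\circ$ and $(x_{m_k}) \subset e_{i+1}^\circ$ converging to $p_{i+1}$ with fixed directions $\theta \ne \theta'$ in the $D_N$-orbit of $\pi_2(u)$. Their images $y_{n_k}, y_{m_k}$ converge to $q$ from opposite sides within $f^\circ$. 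Since $q$ is an interior point of $f$, the billiard map is continuous on a neighborhood of the approaching configurations in $Q$ (no corner to spoil continuity), so after further refining to keep $v$'s directions constant along each sub-subsequence, $\lim_k y_{n_k+1}$ and $\lim_k y_{m_k+1}$ are determined by the continuous flow through $q$. Applying $\phi^{-1}$ and comparing with the continuations in $P$ of the rays from $p_{i+1}$ along the non-exceptional directions $\theta, \theta'$ --- which reach distinct non-corner next hitpoints $z \ne z'$ for appropriate choice of $\theta, \theta'$ --- produces a contradiction. A symmetric argument applied to $\phi^{-1}$ then establishes $\phi^{-1}(C_Q) \subset C_P$, hence $\phi(C_P) = C_Q$.

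The hard part will be the second paragraph. The main subtleties are controlling the directions of $v$ along the extracted subsequences (in the absence of a priori finiteness of $v$'s direction set, this requires either further refinement or an appeal to Proposition \ref{p4} to argue that oscillating directions would force symbolic disagreement) and selecting $\theta \ne \theta'$ so that the rays from $p_{i+1}$ reach distinguishably different next hitpoints, leveraging the $D_N$-structure from Proposition \ref{p-summary}(iii). Once the corner-correspondence step goes through, the rest reduces to relabeling and reading off (C1)--(C2).
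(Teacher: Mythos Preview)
Your overall reduction is correct and matches the paper: once one knows $\phi(C_P)=C_Q$, the homeomorphism $\phi$ carries sides to sides, the relation $\phi(x_n)=y_n$ immediate from (\ref{e18}) gives $\sigma^+(u)=\sigma^+(v)$, and (C1) is just (O1). The paper, however, does not reprove $\phi(C_P)=C_Q$ here; it simply invokes \cite[Theorem~4.2, Lemma~3.3]{BT} for this fact (together with the rationality of $Q$ and non-exceptionality of $v$) and then reads off (C2) in one line.

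Your direct argument for $\phi(C_P)=C_Q$ has a genuine gap in the contradiction step. You pick $\theta$ on $e_i$ and $\theta'$ on $e_{i+1}$ so that the forward limiting rays from $p_{i+1}$ reach distinct points $z\ne z'$ in $\partial P$. In $Q$ the next iterates converge to $w=\pi_1 S(q,\vartheta)$ and $w'=\pi_1 S(q,\vartheta')$, where $\vartheta,\vartheta'$ are the limiting directions of $v$ along your two subsequences. Nothing in your setup forces $\vartheta=\vartheta'$; hence $w\ne w'$ is perfectly consistent with $\phi(z)=w$, $\phi(z')=w'$, and injectivity of $\phi$ is not violated. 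The interior-point continuity at $q$ that you invoke only says each limit is well-defined once the direction is fixed --- it does not collapse the two limits. Your suggested appeal to Proposition~\ref{p4} is also out of place: that proposition separates \emph{symbolic} itineraries, but under order equivalence you do not yet know $\sigma^+(u)=\sigma^+(v)$, so a symbolic discrepancy in $Q$ cannot be transported back to $P$. One way to repair the idea, in the spirit of \cite{BT}, is to first pair directions on $e_i$ and $e_{i+1}$ sharing the \emph{same} absolute planar direction, so that the forward limits in $P$ coincide; this forces $\vartheta=\vartheta'$ via injectivity of $\phi$, and then the \emph{backward} limits in $P$ differ (the two adjacent sides reflect differently at the corner) while in $Q$ they must coincide at $S^{-1}(q,\vartheta)$ --- that is where injectivity of $\phi$ is genuinely contradicted.
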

\begin{proof}It was shown in \cite[Theorem 4.2, Lemma 3.3]{BT} that $Q$ is rational, $v$ is non-exceptional and $\phi(C_P)=C_Q$, hence $\phi$ preserves also the sides:\begin{equation*}\label{e2}\phi([p_i,p_{i+1}])=[q_i,q_{i+1}],~i=1,\dots,k.\end{equation*} Since by (\ref{e18}) for the leaders $u,v$
\begin{equation*}\label{e13-globalphi}\phi(\pi_1(T^nu))=\pi_1(S^nv),\end{equation*}
the symbolic forward itineraries $\sigma^+(u)$, $\sigma^+(v)$ are the same.
\end{proof}

\begin{theorem}\label{converse}
Suppose $P,Q$ are code equivalent with leaders $u,v$; $P$ rational, $u$ generic.  Then $P,Q$ are order equivalent with leaders $u,v$.
\end{theorem}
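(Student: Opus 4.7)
The plan is to realize the (affine) similarity provided by Corollary \ref{cc1} as an explicit affine map and then pin down $v$ as the affine image of $u$ using the uniqueness statement of Theorem \ref{t2}. Once this is done, condition (O2) follows because an orientation-preserving affine bijection preserves the counterclockwise cyclic order on the boundary.

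First I would apply Corollary \ref{cc1}: if $N=N_P\ge 3$ then $Q$ is similar to $P$, while if $N=N_P=2$ then $Q$ is affinely similar to $P$. Combining the matching of corresponding angles (Theorem \ref{p-sameangles}) with the matching of (horizontal/vertical-scaled) side lengths (Theorem \ref{t5}), both taken in counterclockwise order, I would produce an orientation-preserving affine bijection $L\colon\bbr^2\to\bbr^2$ with $L(P)=Q$ and $L(e_i)=f_i$ for every $i$. For $N\ge 3$, $L$ is a similarity (rotation and uniform scaling, composed with a translation); for $N=2$, after rotating coordinates so that every side of $P$ is horizontal or vertical, $L$ takes the form $(x,y)\mapsto(ax+c,by+d)$ with $a,b>0$.

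Next I would verify that $L$ intertwines the billiard dynamics. For a similarity this is automatic. For $N=2$ it is the standard observation that an axis-aligned affine stretch commutes with reflections in horizontal and in vertical sides, so it sends billiard trajectories to billiard trajectories (a direction $\theta$ is carried to $\theta'$ with $\tan\theta'=(b/a)\tan\theta$). In both cases $L(u)$ is a billiard orbit in $Q$ whose side-coding equals $\sigma^+(u)$ because $L(e_i)=f_i$; since the leader $v$ has the same coding by (C2), both $L(u)$ and $v$ belong to $X(\sigma^+(u))\subset V_Q$. The density condition (C1) implies that $u$, and therefore $\sigma^+(u)$, is non-periodic (cf.\ Theorem \ref{t1}), so Theorem \ref{t2} applied in $Q$ gives $X(\sigma^+(u))=\{v\}$ and hence $L(u)=v$. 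By equivariance under iteration this yields $S^nv=L(T^nu)$, so $y_n=L(x_n)$ for all $n\ge 0$. Since $L$ is an orientation-preserving affine bijection taking $\partial P$ onto $\partial Q$ with $L(e_i)=f_i$, it preserves the counterclockwise cyclic order of the boundary, so
$$x_k\in[x_l,x_m]\iff y_k\in[y_l,y_m]$$
for all non-negative integers $k,l,m$. This is condition (O2) of Definition \ref{d2}, while (O1) coincides with (C1) and is already given; hence $P,Q$ are order equivalent with leaders $u,v$.

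I expect the main difficulty to be the $N=2$ case, where one must both construct the orientation-preserving axis-aligned affine map realizing the affine similarity of Corollary \ref{cc1} and verify that it genuinely conjugates the billiard dynamics. The latter is a standard fact about rectangular-type billiards, but it deserves to be written out carefully because it is the reason the argument survives outside the similarity case. A secondary but routine point is checking that the $L$ produced by matching angles and (scaled) side lengths in cyclic order is indeed orientation-preserving; this follows from the fact that both $\partial P$ and $\partial Q$ are traversed counterclockwise with $e_i\sim f_i$.
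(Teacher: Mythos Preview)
Your proposal is correct. For the case $N\ge 3$ it coincides with the paper's argument: both invoke Corollary \ref{cc1} to get a similarity, then use the uniqueness statement of Theorem \ref{t2} to identify $v$ with the image of $u$ under that similarity, from which order equivalence is immediate.

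For $N=2$ you take a genuinely different route from the paper. The paper works on the skeleton $K_P$: from unique ergodicity it gets $\nu=\Phi^{*}\mu$ (phase lengths), observes that the phase length on each edge of $K_P$ is proportional to arc length along the underlying side, and concludes that the homeomorphism $\Phi$ of Proposition \ref{p3} preserves relative positions within each side, hence preserves combinatorial order. You instead realize the affine similarity of Corollary \ref{cc1} concretely as an axis-aligned map $L(x,y)=(ax+c,by+d)$, check that such a map conjugates the billiard dynamics when all sides are horizontal or vertical, and then apply Theorem \ref{t2} exactly as in the $N\ge 3$ case to get $v=L(u)$. Your approach is more uniform (one argument for both cases) and more elementary (no measure theory, just the conjugacy and the pointwise uniqueness of Theorem \ref{t2}); the paper's argument has the mild advantage of not needing the explicit verification that axis-aligned stretches conjugate the billiard, since it stays entirely inside the symbolic/measure picture already built up in Proposition \ref{p3}. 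One small point worth making explicit in your write-up: after rotating $Q$ so that $f_1$ is parallel to $e_1$, the equality of corresponding angles from Theorem \ref{p-sameangles} forces $e_i$ and $f_i$ to be simultaneously horizontal or simultaneously vertical for every $i$, which is what makes the axis-aligned $L$ with $L(e_i)=f_i$ exist.
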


\begin{proof}
Apply Corollary \ref{cc1}, then $P$ and $Q$ are similar (or affinely
 similar). Let $\Phi$ be the map defined in Proposition \ref{p3}.
 
\vskip.1mm\noindent$N=3$. By Proposition \ref{p3}, $\Phi(u) = v$. Since $P$ and
$Q$ are similar, Theorem \ref{t2} implies that $v=u$ (up to similarity) for the same code of $u,v$ hence $P$, $Q$ are order equivalent with leaders $u,v$.
\vskip.1mm\noindent$N=2$. Arguing as in the proof of Corollary \ref{cc1} we get $\nu=\Phi^*\mu$, where $\mu ,\nu$ are the phase 
lengths. Now, on different edges $k_1=[a_1,b_1]$, $k_2=[a_2,b_2]$ of $K_P$ that correspond to the same side $[a,b]$ of $P$ the proportions given by $\mu$ are
 preserved, i.e., for $\mu_i=\mu\vert k_i$ and each $x\in (a,b)$ and corresponding $x_i\in k_i$,
 \begin{equation*}\mu_i([a_i,x_i])/\mu_i(k_i)=\lambda([a,x])/\lambda([a,b]).\end{equation*} 
Since $\nu=\Phi^*\mu$ and $\nu$ is the phase length, on $\ell_i=\Phi(k_i)$ the proportions given by $\nu$ are
also preserved. It means that the sequences $\{\pi_1(T^nu)\}_{n\ge 0}$, $\{\pi_1(S^nv)\}_{n\ge 0}$
  have the same combinatorial order and $P$, $Q$ are order equivalent with leaders $u,v$. 
\end{proof}

\begin{corollary}\label{equiv}
Suppose $P$ is a rational polygon and $u \in FV_P$ is generic. Then $P,Q$  are code equivalent with leaders $u,v$
if and only if  $P,Q$ are order equivalent with leaders $u,v$.
\end{corollary}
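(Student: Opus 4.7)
The plan is to read off both implications from the two preceding theorems, since the hypothesis that $u$ is generic is tailored precisely so that each direction of the equivalence applies without further work. The corollary is essentially a packaging statement, and no new content beyond Theorems \ref{t3} and \ref{converse} is needed.

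For the forward direction, I would assume that $P,Q$ are order equivalent with leaders $u,v$. The definition of generic given just before Corollary \ref{cc1} requires $u$ to be non-exceptional (in addition to having a bi-infinite orbit and living over a uniquely ergodic skeleton). Since $P$ is rational and $u$ is non-exceptional, the hypotheses of Theorem \ref{t3} are satisfied, and that theorem directly yields code equivalence of $P,Q$ with the same leaders $u,v$.

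For the reverse direction, I would assume that $P,Q$ are code equivalent with leaders $u,v$, with $P$ rational and $u$ generic. This is exactly the hypothesis of Theorem \ref{converse}, whose conclusion is that $P,Q$ are order equivalent with leaders $u,v$. Combining the two implications gives the biconditional.

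I do not expect any real obstacle; the only point worth recording is that the generic hypothesis is the common strengthening under which both implications hold: it provides the non-exceptionality needed in Theorem \ref{t3}, and at the same time the unique ergodicity on $K_P$ used (via $\nu = \Phi^*\mu$) in Theorem \ref{converse}.
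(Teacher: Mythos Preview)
Your proposal is correct and matches the paper's proof, which simply reads ``It follows from Theorems \ref{t3} and \ref{converse}.'' The only cosmetic point is that your ``forward'' and ``reverse'' labels are swapped relative to the biconditional as stated (code equivalent $\Leftrightarrow$ order equivalent), but the content of both implications is argued correctly.
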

\begin{proof}
It follows from Theorems \ref{t3} and \ref{converse}.
\end{proof}

{\bf Acknowledgements}  We gratefully acknowledge the support of the
``poste tcheque'' of the University of Toulon. The first author was
also supported by MYES of the Czech Republic via contract MSM
6840770010. The second author was supported by ANR-10-BLAN 0106 Perturbations.

\end{document}